\newtheorem{thm}{Theorem}[section]
\newtheorem{lemma}[thm]{Lemma}
\newtheorem{definition}{Definition}
\newtheorem{remark}{Remark}
\DeclareMathOperator{\arcsinh}{arcsinh}
\newcommand{\nd}{\partial_\nu}
\title[Surfaces with first eigenvalue of large multiplicity]{\textbf{Constructing surfaces with first Steklov eigenvalue of arbitrarily large multiplicity}}
\author{Samuel Audet-Beaumont}
\address{D\'epartement de math\'ematiques et de statistique, Pavillon Alexandre-Vachon, Universit\'e Laval, Qu\'ebec, QC, G1V 0A6, Canada}
\email{samuel.audet-beaumont.1@ulaval.ca}
\begin{document}

\maketitle
\begin{abstract}
  We construct surfaces with arbitrarily large multiplicity for their first non-zero Steklov eigenvalue. The proof is based on a technique by M. Burger and B. Colbois originally used to prove a similar result for the Laplacian spectrum. We start by constructing surfaces $S_p$ with a specific subgroup of isometry $G_p:= \mathbb{Z}_p \rtimes \mathbb{Z}_p^*$ for each prime $p$. We do so by gluing surfaces with boundary following the structure of the Cayley graph of $G_p$. We then exploit the properties of $G_p$ and $S_p$ in order to show that an irreducible representation of high degree (depending on $p$) acts on the eigenspace of functions associated with $\sigma_1(S_p)$, leading to the desired result. 
\end{abstract}

\section{Introduction}
Let $(S,g)$ be a smooth, compact, connected, Riemannian surface with boundary $\partial S$. The Steklov eigenvalue problem consists of finding $(f,\sigma)\in C^\infty(S)\times \mathbb{R}$ with $f\not=0$ satisfying
$$  \begin{cases}
       \Delta f=0  &\quad\text{in } S,\\
       \nd f=\sigma f &\quad\text{on } \partial S;
\end{cases}$$
where $\nd$ is the outward normal derivative on $\partial S$. The numbers $\sigma$ are called the Steklov eigenvalues of $(S,g)$. They form a sequence $0=\sigma_0< \sigma_1 \leq \sigma_2\leq ... \nearrow \infty$, where each eigenvalue is repeated according to its multiplicity. This problem has attracted substantial attention in recent years. See~\cite{CGGS2024} for a survey of recent results and open problems. 

The focus of this paper is on the the multiplicity $m_1(S,g)$ of the first non-zero eigenvalue $\sigma_1>0$. Upper bounds for f $m_1(S,g)$ have been obtained by many authors in recent years. Notably, the works of Fraser and Schoen \cite{FS}, Jammes \cite{PJ} as well as Karpukhin, Kokarev and Polterovich \cite{KKP} brought multiple results concerning upper bounds for the multiplicity of Steklov eigenvalues in terms of the topology and the number of boundary components of the surface. The following result is a particular case of \cite[Theorem 1]{KKP}.
\begin{thm}
  Let $(S,g)$ be an  orientable compact Riemannian surface with non-empty boundary.
  Let $\mathfrak{g}$ be the genus of $(S,g)$. Then,
  $$m_1(S,g)\leq 4\mathfrak{g}+3.$$
\end{thm}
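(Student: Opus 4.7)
The plan is to obtain the bound by a Cheng-type argument: force an eigenfunction in the $\sigma_1$-eigenspace $E$ to have a highly degenerate zero, then convert that degeneracy into a topological inequality via Euler characteristic. Set $m := m_1(S,g)$, so $\dim_{\mathbb{R}} E = m$. A preliminary step is a Courant-type nodal bound: any $u \in E \setminus \{0\}$ partitions $S$ into at most two nodal domains, which follows from the variational characterization of $\sigma_1$ together with the simplicity of $\sigma_0 = 0$ (whose eigenspace is the constants). Consequently the nodal set $N(u) := \{u = 0\}$ disconnects $S$ into at most two open pieces.

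Next I would pick an interior point $p \in S$ and use a dimension count. In local isothermal coordinates at $p$, the space of Taylor jets of harmonic functions of order strictly less than $k$ has real dimension $2k-1$, spanned by $1$ together with $\operatorname{Re}(z^j)$ and $\operatorname{Im}(z^j)$ for $1 \leq j < k$. Vanishing to order at least $k$ at $p$ is therefore $2k-1$ linear conditions on $E$, so whenever $m > 2k-1$ one obtains a nonzero $u \in E$ with such vanishing. By harmonicity $u \sim \operatorname{Re}(c z^k)$ near $p$ with $c \neq 0$, and hence $N(u)$ carries a vertex of valence $2k$ at $p$, with $2k$ smooth branches meeting transversally.

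The final step is a topological count. View $\Gamma := N(u) \cup \partial S$ as a graph properly embedded in $S$, with the high-valence vertex at $p$, possibly additional interior crossings (each of even valence $\geq 4$), and transversal vertices of degree $3$ where interior nodal arcs meet $\partial S$. Applying $\chi(S) = V - E + F = 2 - 2\mathfrak{g} - b$, where $F \leq 2$ by the Courant step and $b$ is the number of boundary circles, and noting that extra interior crossings only decrease $\chi(\Gamma)$ further (so they tighten rather than weaken the count), one bookkeeps the boundary contribution and the valence $2k$ at $p$ to obtain an inequality of the form $k \leq 2\mathfrak{g} + \text{const}$, which combined with $m \leq 2k+1$ yields $m \leq 4\mathfrak{g}+3$.

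The main obstacle is extracting the sharp constant $3$ in the last step rather than some weaker bound. The interaction of $N(u)$ with $\partial S$ is delicate: nodal arcs may hit the boundary multiple times, close up into loops, or fail to separate distinct boundary components. I would probably compare two variants of the vanishing-order step, placing the base point either in the interior or on $\partial S$: at a boundary point, harmonic Taylor jets compatible with the Steklov condition $\partial_\nu u = \sigma_1 u$ satisfy additional linear constraints that roughly halve the dimension, which may give a tighter count and exploit the boundary contribution more directly. Orientability is also used to ensure that the $2k$ local sectors at $p$ reassemble consistently across the handles of $S$ into only two global nodal domains, rather than more.
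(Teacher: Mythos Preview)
The paper does not prove this theorem at all: it is quoted as background, explicitly as ``a particular case of \cite[Theorem 1]{KKP}'', and no argument is given. So there is nothing in the paper to compare your proposal against.

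That said, your outline is the correct genre of argument---it is precisely the Cheng--Besson strategy that underlies the KKP result you would be re-proving. A few remarks on the sketch itself. First, the Courant step needs the nodal structure theory for Steklov eigenfunctions, not just harmonic functions: interior harmonicity gives you the local picture at $p$, but to count nodal domains and to control how arcs meet $\partial S$ you need the boundary regularity and the fact that zeros on $\partial S$ are isolated with finite vanishing order; this is exactly what KKP establish before running the Euler-characteristic count. Second, your honest admission that the constant $3$ is the hard part is accurate: the bookkeeping you describe (valence $2k$ at $p$, $F\le 2$, boundary circles) does not by itself pin down the additive constant, and in KKP the sharp form comes from a more careful graph-theoretic inequality on the nodal graph together with the precise jet count. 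Third, orientability is not used to glue sectors into two nodal domains---Courant already gives that globally---but rather enters the Euler-characteristic formula $\chi(S)=2-2\mathfrak g-b$; in the non-orientable case the bound changes accordingly.

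In short: right idea, but what you have is a plan rather than a proof, and the paper you are working from offers no help since it simply cites the result.
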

All known general upper bounds on $m_1(S,g)$ involve either the genus of the surface $S$, or both the genus and the number of boundary components of the surface $S$. These bounds are linear in term of the genus (and of the number of boundary components, when applicable). However, none of these bounds are known to be sharp for all genus and the question to know if surfaces with arbitrarily large multiplicity $m_1(S,g)$ is left open in the literature. The main result of this paper gives a positive answer to that question.
\begin{thm}\label{thm:main}
For each prime numer $p$, there exists a compact Riemannian surface $(S_p,g_p)$ of genus $\mathfrak{g}_p=1+p(p-1)$ with $2p$ boundary components such that $m_1(S_p,g_p)\geq p-1$.
\end{thm}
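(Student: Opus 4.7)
The plan is to follow the Burger--Colbois strategy, exploiting the representation theory of $G_p = \mathbb{Z}_p \rtimes \mathbb{Z}_p^*$. Recall that $|G_p| = p(p-1)$, that its abelianization is $\mathbb{Z}_p^*$, and consequently that its irreducible complex representations consist of $p-1$ one-dimensional characters factoring through $\mathbb{Z}_p^*$, together with a single irreducible representation of dimension $p-1$ induced from any non-trivial character of the normal subgroup $\mathbb{Z}_p$. The goal is to realize this $(p-1)$-dimensional representation as a subrepresentation of the $\sigma_1$-eigenspace of $(S_p,g_p)$. For the construction, I would fix a suitable symmetric generating set $T$ of $G_p$, assign to every vertex of the Cayley graph $\mathrm{Cay}(G_p,T)$ a copy of a single building block $B$ (a planar surface carrying $|T|$ distinguished gluing boundary circles plus some auxiliary free boundary circles), and identify blocks along matching distinguished circles so that left-multiplication on $\mathrm{Cay}(G_p,T)$ lifts to an isometric $G_p$-action on $S_p$. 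The genus and boundary counts of $B$ are tuned so that the Euler-characteristic computation yields $\chi(S_p) = -2p^2$ with exactly $2p$ surviving boundary circles, producing a surface of genus $1 + p(p-1)$ with $2p$ boundary components as required.

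\textbf{Spectral decomposition.} Because $G_p$ acts by isometries, $L^2(\partial S_p)$ decomposes into isotypic components indexed by $\widehat{G_p}$, and the Dirichlet-to-Neumann operator preserves this decomposition. The Steklov spectrum of $(S_p,g_p)$ is then the disjoint union (with multiplicities) of the spectra restricted to these components. The trivial representation supplies the zero eigenvalue; eigenvalues appearing in the $(p-1)$-dimensional isotypic component automatically have geometric multiplicity at least $p-1$; and each non-trivial one-dimensional character contributes eigenvalues of multiplicity one within its own component. By the min--max principle, it suffices to show that the smallest positive eigenvalue carried by the $(p-1)$-dimensional isotypic component is strictly smaller than every positive eigenvalue appearing in the $p-2$ non-trivial one-dimensional components.

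\textbf{The main obstacle: identifying $\sigma_1$.} I would achieve this separation through the Burger--Colbois bottleneck device, equipping the gluing circles with thin collars depending on a parameter $\varepsilon > 0$. A standard argument then produces test functions in the $(p-1)$-dimensional isotypic component --- linear combinations that transform non-trivially under the $\mathbb{Z}_p$-action on blocks --- whose Rayleigh quotients tend to $0$ as $\varepsilon \to 0$. By contrast, functions in a non-trivial one-dimensional isotypic component are invariant under the normal subgroup $\mathbb{Z}_p$ and therefore descend to the quotient $S_p/\mathbb{Z}_p$, whose Steklov spectrum is bounded below uniformly in $\varepsilon$. Choosing $\varepsilon$ small enough forces $\sigma_1(S_p,g_p)$ to lie in the $(p-1)$-dimensional isotypic component and gives $m_1(S_p,g_p) \geq p-1$. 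The main technical obstacle is establishing these two Rayleigh-quotient estimates compatibly --- an upper bound vanishing with $\varepsilon$ for $(p-1)$-dimensional test functions, together with a uniform positive lower bound for the remaining non-trivial components.
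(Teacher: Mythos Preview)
Your proposal is correct and follows essentially the same Burger--Colbois route as the paper: the same group $G_p$, the same Cayley-graph construction with a degenerating geometric parameter, and the same pair of estimates (an upper bound $\sigma_1(S_p)\to 0$ versus a uniform positive lower bound on $\sigma_1(S_p/\mathbb{Z}_p)$, the latter obtained in the paper via a Steklov--Neumann comparison on an $\ell$-independent subdomain). One small simplification the paper makes relative to your outline: rather than landing test functions inside the $(p-1)$-dimensional isotypic component, it merely shows $\sigma_1(S_p(\ell))\to 0$ using two disjointly supported bump functions and then eliminates \emph{all} one-dimensional representations at once (including the trivial one, whose second eigenvalue you neglected to mention in your comparison) via the quotient argument---logically equivalent to your formulation but a bit cleaner to execute.
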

The proof of this result is based on a technique that was developped by M. Burger and B. Colbois \cite{BurgCol, BC} used originally to obtain a similar result for the eigenvalues of the Laplacian on closed surfaces. The idea is to construct a surface $(S_p,g)$ with a specific subgroup of isometry $G_p:=\mathbb{Z}_p \rtimes \mathbb{Z}_p^*$ by building the surface around the Cayley graph of $G_p$. Using the fact that the problem is invariant by isometry, we can define an action of $G_p$ on the space of eigenfunctions $E_1(S_p)$ associated to $\sigma_1$. This action on $E_1(S_p)$ is also isometric for the inner product of $L_2(S_p)$. This action can then be decomposed in irreductible representations of the group $G_p$. By choosing a group with an irreductible representation of high degree and by showing that this representation acts on $E_1(S_p)$, we show that $E_1$ must be of large dimension, which implies that $\sigma_1$ has high multiplicity. 

\subsection{Multiplicity bounds for the Steklov problem with density}
In his paper \cite{PJ}, Jammes studied a more general version of the Steklov problem:
$$  \begin{cases}
       \text{div}(\gamma \nabla f)=0  &\quad\text{in } S,\\
      \gamma  \nd f=\sigma \rho f.&\quad\text{on } \partial S.
     \end{cases}$$
Here, $\gamma\in C^\infty(S)$ and $\rho\in C^\infty(\partial S)$ are strictly positive density functions. The multiplicity of the first non-zero eigenvalue is now written $m_1(S,g,\gamma,\rho)$ to stress the dependence on the density functions.
Given a surface $S$ of genus $\mathfrak{g}$ with $l$ boundary components, let
$$M_1(\mathfrak{g},l)=\sup_{g,\rho,\gamma}m_1(S,g,\gamma,\rho)$$
where the supremum is over all Riemannian metrics and densities. 
In his paper~\cite{Jammes2014}, Jammes proposed the following conjecture: 
$$M_1(\mathfrak{g},l)\geq \text{Chr}_0(S)-1,$$
where the \emph{relative chromatic number} $\text{Chr}_0(S)$ is the number of vertices of the largest complete graph that can be embedded in $S$ with all vertices on the boundary $\partial S$. The conjecture is analogous to a conjecture of Colin de Verdière for closed surfaces~\cite{CdV1986}.
In his paper \cite{PJ}, Jammes proved part of his conjecture. He showed that
$$M_1(\mathfrak{g},l)\geq \text{Chr}_0(S)-1.$$
He also showed that $\text{Chr}_0(S)=O(\sqrt{\mathfrak{g}})$ as $\mathfrak{g}\to+\infty$, providing examples where the multiplicity $m_1(S,g,\gamma,\rho)$ is arbitrarily large. However, the proof of this last inequality rely heavily on the theory of perturbation and uses the freedom afforded by the density functions in an essential way. 


\textbf{Outline of this paper.} In section 2, we describe the construction of Cayley graphs $\Gamma_p$ associated with the groups $G_p$. In section 3, we discuss the construction of $S_p$ by using $G_p$ and discuss the basic properties of such surfaces. In section 4, we catalog all irreduscible representations of $G_p$. In section 5, we prove the main theorem.

\section{Construction of the Cayley graphs $\Gamma_p$} Let $p\geq 3$ be a prime number.
 $$G_p:= \mathbb{Z}_p \rtimes \mathbb{Z}_p^*$$
 where $\mathbb{Z}_p$ is the cyclic group of $p$ elements and $\mathbb{Z}_p^*$ is the multiplicative group modulo $p$. Alternatively, we can represent $G_p$ in terms of generators. Let $k$ be an integer such that $k$ is a primitive $p-1$-root modulo $p$. Then
 
 $$G_p= \langle \delta_1,\delta_2 \;|\;\delta_1^p=e,\; \delta_2^{p-1}=e,\;\delta_2^{-1}\delta_1\delta_2=\delta_1^k\rangle.$$
 One can verify that these two groups are isomorphic by considering the isomorphism $\psi$ that goes from $\mathbb{Z}_p \rtimes \mathbb{Z}_p^*$ to $G_p$ as written above induced by 
 $$\psi(\bar{1},\bar{1})=\delta_1,$$
 $$\psi(\bar{0},\bar{k})=\delta_2.$$
 We define $\Gamma_p$ as the Cayley graph of $G_p$ with generators $\{\delta_1,\delta_2\}$.
 
 \begin{definition} The Cayley graph $\Gamma(G,S)$ of a group $G$ with set of generator $S$ is the oriented graph with vertices $g\in G$ and edges between $g_1,g_2\in S$ if and only if there is $\delta \in S$ such that $g_1 \delta = g_2$. This relation also induces a natural orientation on each edge of the graph.
 \end{definition}
 Note that the construction of the graph is dependent on the choice of set of generators $S$. Also, since the vertices are associated with group elements, the number of vertices is the same as the number of elements in the group. Finally, since every generator and its inverse can be applied to an element $g\in G$, the degree of each vertex in $\Gamma$ is equal to $2|S|$. For more details on Cayley graphs, see \cite{Cayley}.
 
For example, we can use this definition to build the Cayley graph of $G_3$ using $S=\{\delta_1,\delta_2\}$ defined as before.

\begin{center}
\begin{tikzcd}
                                                      & \delta_1 \arrow[rd, two heads] \arrow[dd, bend right]         &                                                                \\
e \arrow[ru, two heads] \arrow[dd, bend right]        &                                                               & \delta_1^2 \arrow[ll, two heads] \arrow[dd, bend left]         \\
                                                      & \delta_1\delta_2 \arrow[ld, two heads] \arrow[uu, bend right] &                                                                \\
\delta_2 \arrow[rr, two heads] \arrow[uu, bend right] &                                                               & \delta_1^2\delta_2 \arrow[lu, two heads] \arrow[uu, bend left]
\end{tikzcd}

\end{center}
Here, the double arrows represent the action by $\delta_1$ and the simple arrows represent the action by $\delta_2$. Since $|G_p|= p(p-1)$, the graphs $\Gamma(G_p,\{\delta_1,\delta_2\})$ get increasingly complicated as $p$ grows larger. 

As a direct consequence of this construction, the graph $\Gamma(G_p,\{\delta_1,\delta_2\})$ admits an action of $G_p$ by isometry induced by the action of the elements of the group $G$ on the vertices of the graph by left multiplication. 

For the next sections, we will opt for the more compact notation $\Gamma_p:=\Gamma(G_p,\{\delta_1,\delta_2\})$.

\section{Construction of the surface $S_p$}
Let $\ell\in \mathbb{R}_{>0}$. Let 
$$\mathcal{S}:=\{(x,y)\in [-2,2]\times [-2,2] \; : \; x^2+y^2>1 \}$$
be the perforated square equipped with the euclidean metric of $\mathbb{R}^2$. Let $\mathcal{P}(\ell)$ be the pair of hyperbolic pants with one boundary component of length $2 \pi$ and two boundary components of length $\ell$. Note that by Theorem 3.1.7 of \cite{PB},  such pants are uniquely defined up to isometry by the lengths of their geodesic boundary components. The building block $B(\ell)$ is then obtained by gluing one boundary component of the cylinder $\mathcal{C}:= S^1\times [0,1]$  along the inner circular boundary component of $\mathcal{S}$ and the other boundary component of $\mathcal{C}$ along the boundary component of $\mathcal{P}(\ell)$ of length $2\pi$. The metric on $\mathcal{C}$ is chosen so as to interpolate smoothly between the metric on $\mathcal{S}$ and the metric on $\mathcal{P}(\ell)$. Note that every hyperbolic gluing in this section onwards is performed with no twist.
\begin{figure}[H]
\centering
\includegraphics[scale=0.3]{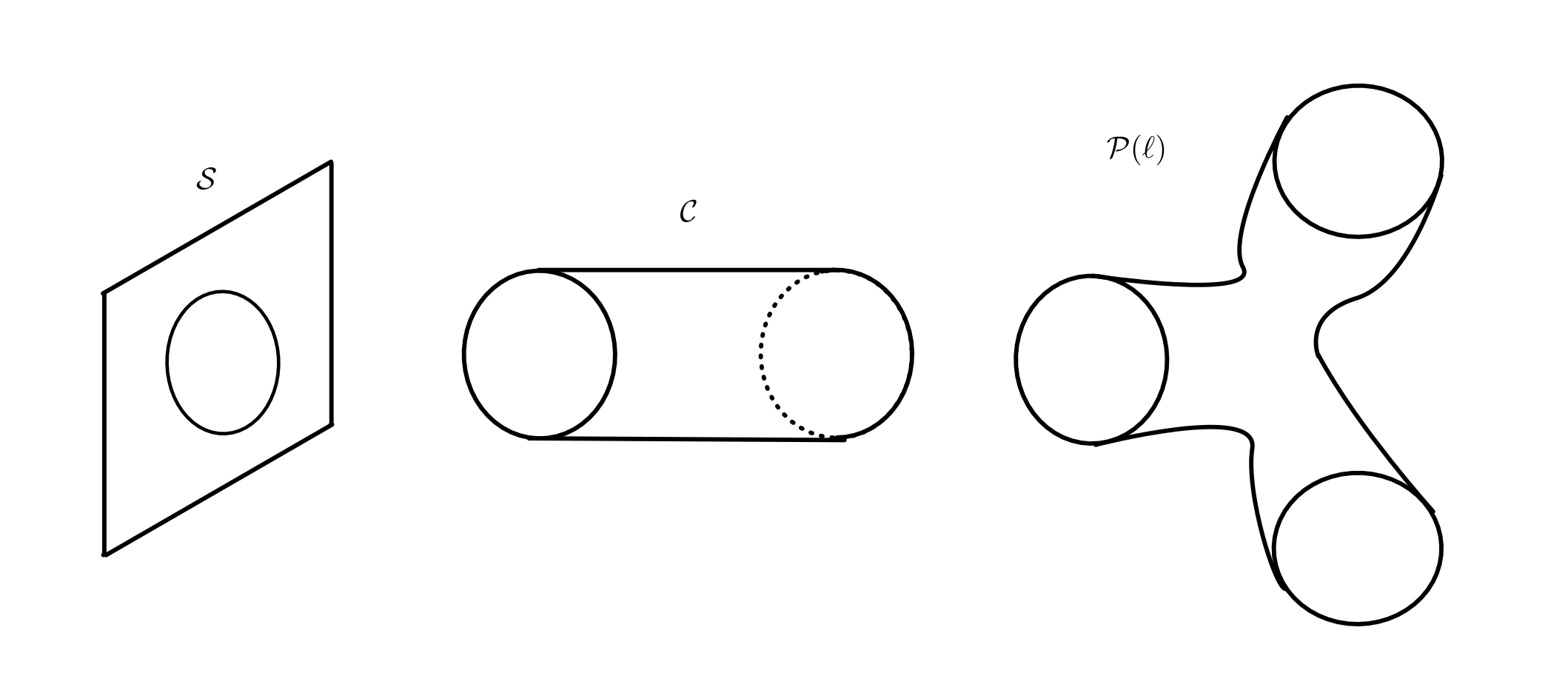}
\end{figure}
Thus the building block $B(\ell)$ pocesses $3$ boundary components : one of them corresponding to the sides of the square $\mathcal{S}$ and two of them corresponding to the boundary components of length $\ell$ in $\mathcal{P}(\ell)$. In what follows, we will denote by $\gamma_1^+$ and $\gamma_1^-$ the two boundary components of length $\ell$ in $B(\ell)$. We will also denote $(\gamma_2^+,\gamma_2^-)$ and $(b_1,b_2)$ the two pairs of parallel sides of length $4$ of the remaining boundary component in $B(\ell)$.
\begin{figure}[H]
\centering
\includegraphics[scale=0.4]{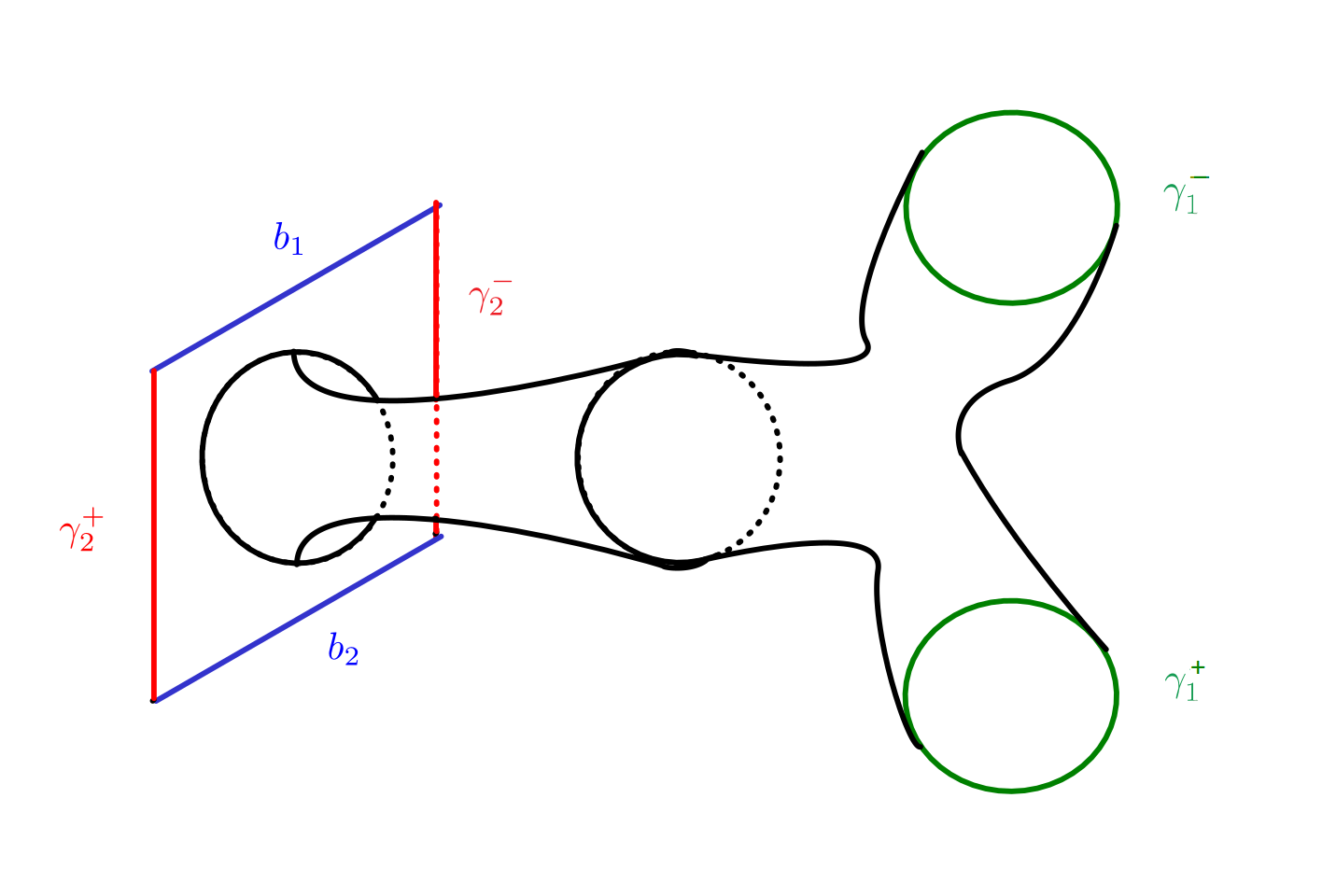}
	\caption{The building block $B(\ell)$}

\end{figure}
The surface $S_p(\ell)$ is obtained by gluing copies of $B(\ell)$ along the Cayley graph $\Gamma_p$ of section 2 via the following procedure. First, we assign to each vertex $v$ of $\Gamma_p$ a copy of $B(\ell)$ labeled $B_v(\ell)$. We will denote the boundary components $\gamma^\pm_i$ of $B_v(\ell)$ by $\gamma^\pm_{i,v}$ for $i\in \{1,2\}$. Let $v,w$ be two vertices of $\Gamma_p$ and let $B_v(\ell),B_w(\ell)$ be their respective assigned copy of $B(\ell)$. We glue $B_v(\ell)$ along $\gamma_{1,v}^+$ to $B_w(\ell)$ along $\gamma_{1,w}^-$ if there is an edge corresponding to $\delta_1$ in $\Gamma_p$ going from $v$ to $w$. Similarly, we glue $B_v(\ell)$ along $\gamma_{2,v}^+$ to $B_w(\ell)$ along $\gamma_{2,w}^-$ if there is an edge corresponding to $\delta_2$ in $\Gamma_p$ going from $v$ to $w$.

\begin{figure}[H]
\centering
\includegraphics[scale=0.2]{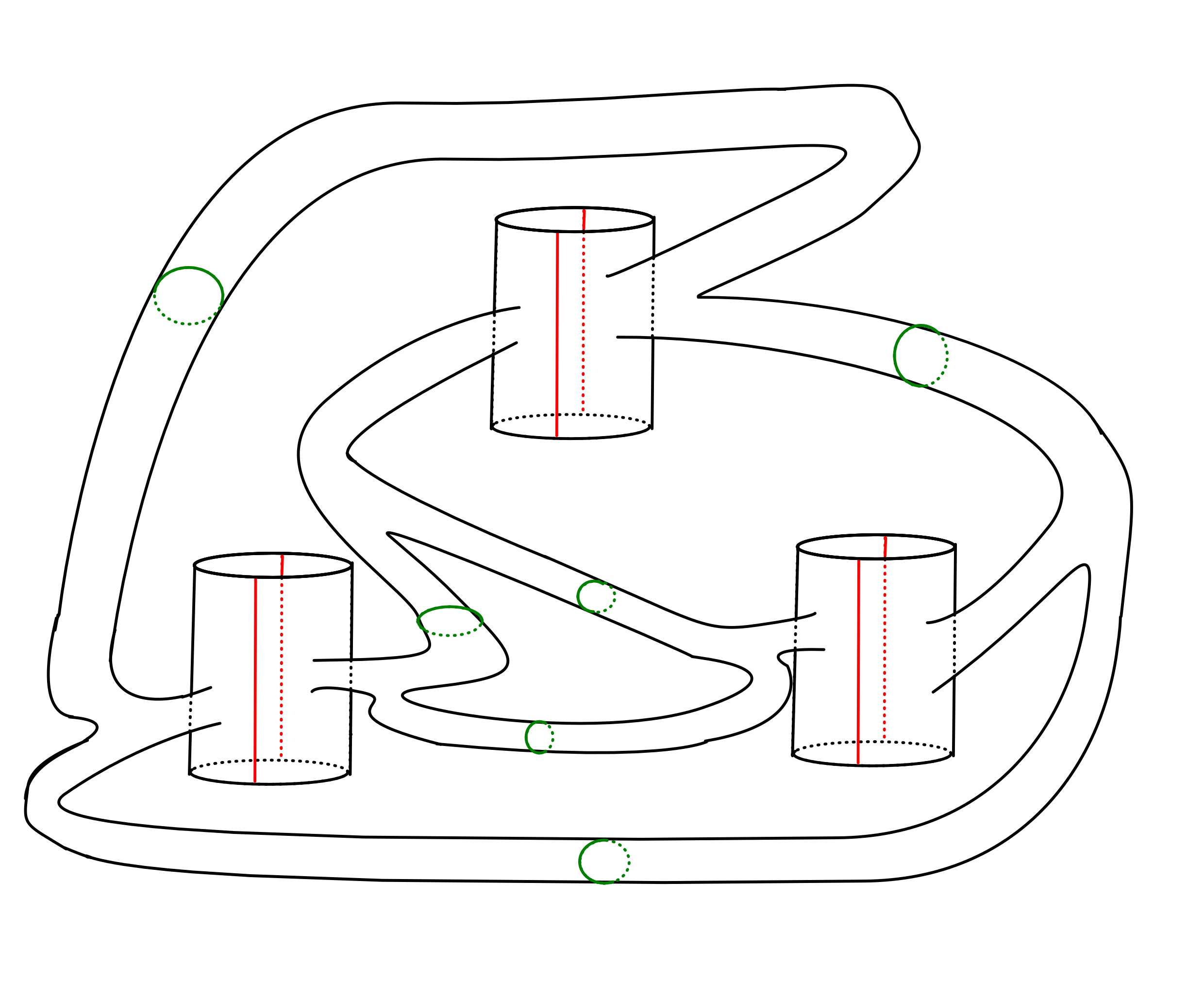}
	\caption{The surface $S_3$}

\end{figure}
\begin{lemma}
The surface $S_p$ has $2p$ boundary components and genus $g=1+|G_p|=1+p(p-1)$.
\end{lemma}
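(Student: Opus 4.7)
The plan is to compute $\chi(S_p)$ and $b_0(S_p)$ separately and then solve $\chi=2-2g-b_0$ for the genus $g$.

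First I would identify $\partial S_p$. After the gluings, the circles $\gamma_{1,v}^{\pm}$ and the arcs $\gamma_{2,v}^{\pm}$ have all been paired off and lie in the interior; what survives on $\partial S_p$ is precisely the union of the arcs $b_{1,v}$ and $b_{2,v}$ over all vertices $v$. The task is then to trace how these arcs concatenate at the corner points identified by the $\gamma_2$-gluings. Within a single orbit of $\langle \delta_2\rangle$ (a cyclic orbit of size $p-1$), the corner identification under the no-twist convention matches the endpoints of consecutive $b_1$-arcs with one another (and likewise for $b_2$-arcs). Iterating around the orbit closes each family into a single boundary circle, yielding two circles per orbit. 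Since $\delta_2$ has order $p-1$ in $G_p$, the number of such orbits is $|G_p|/(p-1)=p$, so $b_0(S_p)=2p$.

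Next I would compute $\chi(S_p)$ by additivity. The block $B(\ell)$ is topologically a pair of pants (built by gluing two planar annuli and a pants surface along circles), so $\chi(B(\ell))=-1$; the disjoint union of the $|G_p|=p(p-1)$ blocks thus contributes $-p(p-1)$. Each $\gamma_1$-gluing identifies a pair of disjoint circles of Euler characteristic zero and so preserves $\chi$. Each $\gamma_2$-gluing identifies a pair of arcs, and the inclusion-exclusion formula $\chi(A\cup B)=\chi(A)+\chi(B)-\chi(A\cap B)$ records its contribution. Summing over all $p(p-1)$ gluings of each type produces $\chi(S_p)$, and substituting into $\chi=2-2g-b_0$ together with $b_0=2p$ yields $g=1+p(p-1)$.

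The main technical hurdle is the boundary-tracing step. One must carefully examine the cyclic order $(\gamma_2^+,b_1,\gamma_2^-,b_2)$ of the sides around each square boundary and verify that the no-twist convention genuinely pairs $b_1$-endpoints with $b_1$-endpoints of adjacent blocks (yielding two disjoint circles per $\delta_2$-orbit) rather than crossing them (which would merge them into a single longer cycle). Once that combinatorial fact is pinned down, the Euler-characteristic bookkeeping in the second step is a routine application of additivity under circle and arc gluings, and the genus formula drops out.
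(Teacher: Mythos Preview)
Your boundary-component argument is essentially the paper's: both organise the surviving $b$-arcs by the $p$ left cosets $\delta_1^j\langle\delta_2\rangle$ and read off two circles per coset.

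For the genus you take a genuinely different route (Euler characteristic and inclusion--exclusion) from the paper's pictorial argument (cap off, recognise $\hat S_p$ as a thickened multigraph, count bounded regions). Your method is cleaner, but the sentence ``substituting \ldots\ yields $g=1+p(p-1)$'' is where it breaks down. Carrying your bookkeeping through: $\chi(B(\ell))=-1$, so the $p(p-1)$ blocks give $-p(p-1)$; the $p(p-1)$ circle gluings along $\gamma_1$ leave $\chi$ unchanged; and the $p(p-1)$ arc gluings along $\gamma_2$ each lower $\chi$ by $1$ (the arcs are pairwise disjoint even at their endpoints, since every square corner lies on exactly one $\gamma_2$-side). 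Hence
\[
\chi(S_p)=-2p(p-1),\qquad g=\tfrac12\bigl(2-\chi-b_0\bigr)=1+p(p-1)-p=(p-1)^2,
\]
not $1+p(p-1)$. So the computation does \emph{not} reproduce the stated value.

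The discrepancy is not in your method but in the lemma as stated. The paper's own proof ends by counting the bounded regions of a $p$-cycle with $(p-1)$-fold edges; that count is the first Betti number $1-V+E=1-p+p(p-1)=(p-1)^2$, and the printed value $1+p(p-1)$ simply drops the~$-p$. (The intermediate sub-lemma claiming $g\delta_1\in C_{j\pm1}$ is also false for $p\ge5$: with $k=2$ one has $\delta_2\in C_0$ but $\delta_2\delta_1=\delta_1^3\delta_2\in C_3$.) Your Euler-characteristic approach is therefore correct and in fact detects an arithmetic slip in the paper; it proves the corrected genus $(p-1)^2$.
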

\begin{proof}
For $j\in\{0,1, \dots , p-1\}$, let 
$$C_j;=\{ g\in G \; : \; \exists m\in\mathbb{N}, \; g= \delta_1^j\delta_2^m\} $$
denote the $\delta_2$ generated cycles of $\delta_1^j$ in $\Gamma_p$. Since 
$$|G_p|\leq \sum_{j=0}^{p-1} |C_i|\leq p |C_p|=p(p-1)= |G_p|,$$
we have that the $C_i$ are disjoint and form a partition of $G$. Since, by construction, we have two boundary components per $\delta_2$ cycles $C_i$, the first assertion of the lemma follows.

For $j\in\{0,1, \dots , p-1\}$, let
$$\textit{Cyl}_j:= \bigcup_{g\in C_j}B_g(\ell) \subset S_p$$
be the region of $S_p$ corresponding to the $\delta_2$ cycles. It follows that the $\textit{Cyl}_j$ are pairwise disjoint cylinders with $2(p-1)$ handles for all $j\in\{0,1, \dots , p-1\}$. In order to get the genus of $S_p$, we glue disks along its boundary components and count the genus of the resulting closed surface $\hat{S}_p$. Note that
such gluings transform $\textit{Cyl}_j$ into spheres with $2(p-1)$ handles.
\begin{figure}[H]
\centering
\includegraphics[scale=0.25]{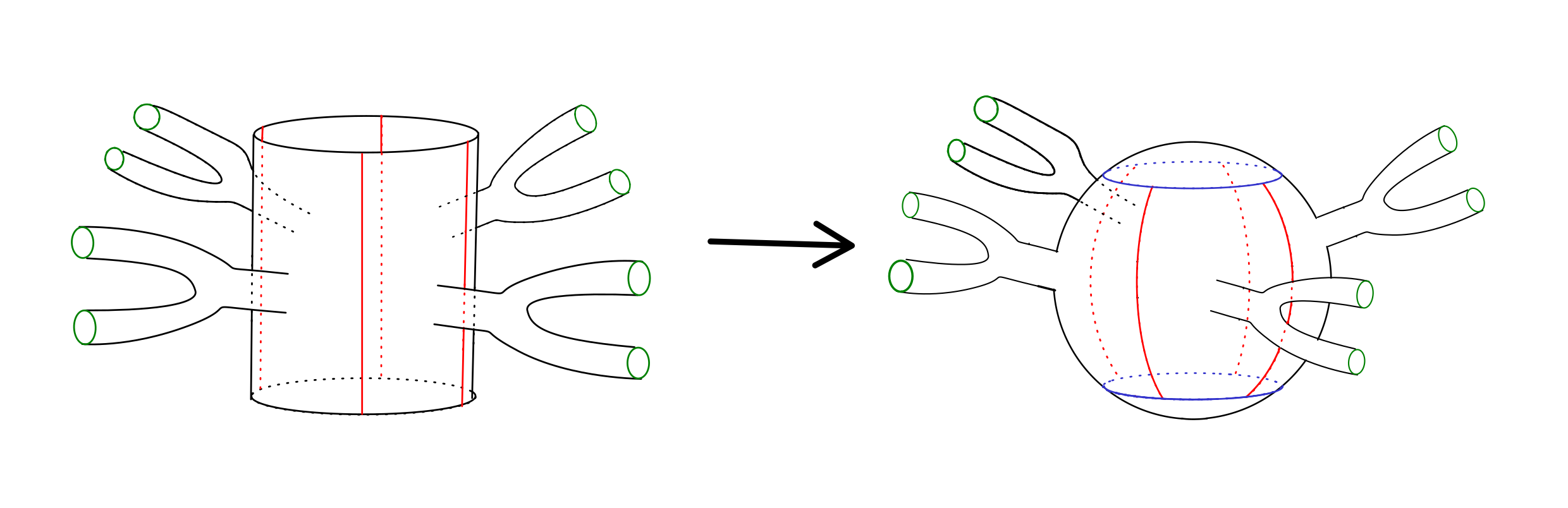}
	\caption{Transformation of $\textit{Cyl}_j$ in $S_5$ by gluing disks along its boundary components}

\end{figure}
Direct calculations also yield the following result.
\begin{lemma}
For all $j\in \{0,1,\dots , p-1\}$ and for all $g\in C_j$ we have either one of these two situations 
\begin{enumerate}
    \item $g\delta_1 \in C_{i+1}$ and $g \delta_1^{-1}\in C_{i-1},$
    \item $g\delta_1 \in C_{i-1}$ and $g \delta_1^{-1}\in C_{i+1}.$
\end{enumerate}
\end{lemma}
Thus $\hat{S}_p$ is homeomorphic to the surface composed of $p$ spheres $s_1,s_2,\dots s_p$ such that $s_i$ is linked to $s_j$ via $p-1$ disjoint cylinders if and only if $j\equiv i+1$ or $j\equiv i+1$ modulo $p$. One can represent these surfaces as a thickening of a $p$ cyclic graph of which every edge is replaced by $p-1$ edges. Such a graph can be seen below for $p=5$.

\begin{center}
\begin{tikzcd}
                                                                                                                                 &                                                                                                                                                             & s_4 \arrow[rrdd, no head, bend left] \arrow[rrdd, no head, bend left=49] \arrow[rrdd, no head] \arrow[rrdd, no head, bend right] &                                                                                                                           &                                                                                              \\
                                                                                                                                 &                                                                                                                                                             &                                                                                                                                  &                                                                                                                           &                                                                                              \\
s_5 \arrow[rruu, no head, bend left] \arrow[rruu, no head, bend right] \arrow[rruu, no head, bend left=49] \arrow[rruu, no head] &                                                                                                                                                             &                                                                                                                                  &                                                                                                                           & s_3 \arrow[ldd, no head, bend right] \arrow[ldd, no head, bend left=49] \arrow[ldd, no head] \\
                                                                                                                                 &                                                                                                                                                             &                                                                                                                                  &                                                                                                                           &                                                                                              \\
                                                                                                                                 & s_1 \arrow[rr, no head, bend left] \arrow[luu, no head, bend left=49] \arrow[luu, no head, bend left] \arrow[luu, no head, bend right] \arrow[luu, no head] &                                                                                                                                  & s_2 \arrow[ruu, no head, bend right] \arrow[ll, no head, bend left] \arrow[ll, no head, bend left=49] \arrow[ll, no head] &                                                                                             
\end{tikzcd}
\end{center}
The counting of the genus can then be handled directly by counting the bounded regions of this planar graph. The result follows. \end{proof}

\begin{remark}
The construction differs from the one presented in \cite{BC} due to the presence of boundary components. The goal here is to place the boundary components so as to minimize the number of boundary components and, more importantly, the genus of the resulting surface. By doing so, we can get a slightly stronger final result.
\end{remark}


\section{Irreducible representations of $G_p$}
Note that since $\Gamma_p$ is a Cayley graph of $G_p$, $G_p$ acts on the vertices $\Gamma_p$ by isometry via the group operation. This action can be lifted to an action by isometry of $G_p$ on $S_p$. Let 
$$\psi_{v\rightarrow w}\; : \;B_v(\ell)\rightarrow B_w(\ell)$$
be the canonical isometry between $B_v(\ell)$ and $B_w(\ell)$. Then one can explicit the above mentioned action by isometry of $G_p$ on $S_p$ via
$$\phi_g : S_p \rightarrow S_p,$$
$$x_v \mapsto \phi_g(x_v)= \psi_{v\rightarrow g(v)}(x_v),$$
where $x_v$ is a point of  $B_v(\ell)$. This action of $G_p$ on the surface $S_p$ induces an action of $G_p$ on the eigenspace of the functions associated with $\sigma_1(S_p)$ via the maps
$$\phi_g^*: E_1(S_p) \rightarrow E_1(S_p),$$
$$ f\mapsto f\circ \phi_g.$$
Since this action is linear, by taking a basis of $E_1(S_p)$, one can represent the action of $g\in G_p$  by a matrix $A_g\in GL_N(\mathbb{C})$, where $N$ represents the dimension of $E_1(S_p)$. Thus we have a representation of $G_p$ acting on $E_1(S_p)$. By changing the basis of $E_1(S_p)$ if necessary, the action of $G_p$ on $E_1(S_p)$ can be decomposed as a direct sum of irreducible representations. This induces a decomposition of $E_1(S_p)$ in a direct sum of subspaces associated to irreducible representations of $G_p$ acting on $E_1(S_p)$. Thus if we can show that an irreducible representation of high degree acts on $E_1(S_p)$, it follows that there is a subspace of $E_1(S_p)$ of high dimension, which in turn implies that $E_1(S_p)$ has high dimension and that $\sigma_1(S_p)$ has high multiplicity. All of the aforementioned results on representation theory can be seen in greater detail in \cite{BC}.

Thus we will now explicit the irreducible representations of $G_p$. Since it is a well known topic, we will omit the technical details and simply state the results.

First, $G_p$ admits $(p-1)$ irreducible representations of degree $1$. Let $\alpha$ be a primitive $(p-1)$-root of $1$ in $\mathbb{C}$. Then, for all $r$ such that $1 \leq r \leq p-1$, we have the following irreducible representations :
$$\rho_r : G_p\rightarrow \mathbb{C}^*$$
induced by
$$\rho_r(\delta_1)=1,$$
$$\rho_r(\delta_2)=\alpha^r.$$
One can verify that $\rho_r$ is a group homomorphism for all $r$. These representations are all irreducible and non-equivalent. Observe that since $\delta_1$ is in the kernel of $\rho_r$, the subgroup $\mathbb{Z}_p$ of $G_p$ generated by $\delta_1$ lies in the kernel of $\rho_r$ for all $r$.

Next, we will show that $G_p$ admits $1$ irreducible representation of degree $p-1$. Let $\xi$ be a primitive $p$-root of $1$ in $\mathbb{C}$ and $k$ be a primitive $(p-1)$-root of $1$ modulo $p$. It follows that $\xi^{k^i}\not=\xi^{k^j}$ if $j\not=i$ for all $0\leq i,j\leq p-2$. We have that 
$$\rho:G_p\rightarrow GL_{p-1}(\mathbb{C})$$
induced by
$$\rho(\delta_1)=
\begin{pmatrix}
\xi &  &  & 0\\
 & \xi^k &  & \\
 &  & ... & \\
0 &  &  & \xi^{k^{p-2}}
\end{pmatrix}, \;
\rho(\delta_2)=\begin{pmatrix}
0 & 0 & ... &  & 1\\
1 & 0 & & & 0\\
0 & 1 & & & \\
  & 0 & & & \\
   &  & & & \\
0 &  & & 1 & 0
\end{pmatrix}$$
constitutes a representation of $G_p$. We then prove the following lemma.
\begin{lemma}
$\rho$ is an irreducible reprensation of $G_p$. 
\end{lemma}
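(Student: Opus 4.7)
The plan is to exploit the very transparent structure of $\rho$: $\rho(\delta_1)$ is diagonal with distinct eigenvalues, and $\rho(\delta_2)$ is a cyclic permutation of the standard basis. Together these two facts will force any nonzero invariant subspace to be everything.

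First I would observe that the diagonal entries of $\rho(\delta_1)$ are $\xi, \xi^k, \xi^{k^2}, \ldots, \xi^{k^{p-2}}$, and that these are pairwise distinct. Indeed, since $k$ is a primitive $(p-1)$-root of unity modulo $p$, the exponents $k^0, k^1, \ldots, k^{p-2}$ run over all nonzero residues modulo $p$, and $\xi$ is a primitive $p$-th root of unity, so the values $\xi^{k^j}$ are all distinct. Consequently the eigenspaces of $\rho(\delta_1)$ are exactly the $p-1$ lines $\mathbb{C}e_0, \mathbb{C}e_1, \ldots, \mathbb{C}e_{p-2}$ spanned by the standard basis vectors.

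Next, let $V \subseteq \mathbb{C}^{p-1}$ be any nonzero $\rho(G_p)$-invariant subspace. Since $V$ is in particular stable under the diagonalizable operator $\rho(\delta_1)$ whose eigenspaces are the coordinate lines, $V$ must be a sum of a subset of those lines; that is, $V = \mathrm{span}\{e_j : j \in J\}$ for some nonempty $J \subseteq \{0, 1, \ldots, p-2\}$. Reading off the matrix $\rho(\delta_2)$, one sees that it acts on the basis by the cyclic permutation $e_j \mapsto e_{j+1 \bmod (p-1)}$. Hence if $e_{j_0} \in V$ for some $j_0$, then applying powers of $\rho(\delta_2)$ shows that every $e_j$ lies in $V$, so $V = \mathbb{C}^{p-1}$.

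This shows that the only $\rho(G_p)$-invariant subspaces are $\{0\}$ and $\mathbb{C}^{p-1}$, so $\rho$ is irreducible. I do not expect any real obstacle here, as the argument is essentially bookkeeping once the two structural observations (distinct eigenvalues of $\rho(\delta_1)$, transitivity of $\rho(\delta_2)$ on the eigenbasis) are in place; the only small point requiring care is verifying that the exponents $\{k^j \bmod p\}_{j=0}^{p-2}$ are indeed all distinct and nonzero, which follows directly from the choice of $k$ as a primitive root modulo $p$.
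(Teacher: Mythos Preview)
Your proof is correct and rests on the same two structural facts as the paper's --- that $\rho(\delta_1)$ has pairwise distinct eigenvalues (so its eigenspaces are the coordinate lines) and that $\rho(\delta_2)$ permutes those lines cyclically --- but you package them differently. The paper proceeds via Schur's lemma: it shows that any $A$ commuting with $\rho(\delta_1)$ must be diagonal, and then that a diagonal $A$ commuting with $\rho(\delta_2)$ must be scalar, concluding irreducibility from the triviality of the commutant. You instead argue directly from the definition, showing that any nonzero invariant subspace contains some $e_{j_0}$ (because it is a sum of $\rho(\delta_1)$-eigenlines) and hence, by the cyclic action of $\rho(\delta_2)$, all of $\mathbb{C}^{p-1}$. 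Your route is slightly more elementary in that it avoids invoking Schur's lemma and the implicit appeal to complete reducibility behind its converse direction; the paper's route is a clean illustration of a standard technique. Either way the substance is the same pair of observations, and your argument goes through without issue.
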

\begin{proof}
By Schur's lemma (Lemma 1.7 in \cite{representationtheory} for example), it suffices to prove that the only matrices commuting with $\rho(g)$ for all $g\in G_p$ are scalar matrices.

Let $A\in GL_{p-1}(\mathbb{C})$ such that it commutes with $\rho(g)$ for all $g\in G_p$. Then, in particular, it commutes with $\rho(\delta_1)$. Let
$$B:=A\rho(\delta_1),$$
$$C:= \rho(\delta_1) A.$$
It follows that $B=C$ since $A$ and $\rho(\delta_1)$ commute. For all $i\not=j$, we have :
$$b_{i,j}= a_{i,j}\xi^{k^{j-1}},$$
$$c_{i,j}= a_{i,j}\xi^{k^{i-1}}.$$
Since $\xi^{k^{j-1}}\not=\xi^{k^{i-1}}$ if $j\not=i$, it implies that $A$ must be diagonal. Let 
$$X:=A \rho(\delta_2),$$
$$Y:= \rho(\delta_2) A.$$
Once again, we obtain that $X=Y$ since $A$ must commute with $\rho(\delta_2)$. For all $1\leq i\leq p-2$, we have :
$$x_{i+1,i}=a_{i,i},$$
$$y_{i+1,i}=a_{i+1,i+1}.$$
It implies that
$$a_{1,1}=a_{2,2}=\cdots=a_{p-1,p-1}.$$
It follows that $A$ must be scalar and the proof is complete.

\end{proof}

Finally, since the sum of the square of the degrees of these irreducible representations is equal to $|G_p|$, we can conclude that we have all the possible irreducible representations of $G_p$.

\section{Proof of the main result}
What is left to show is that the representation of degree $p-1$ acts on $E_1(S_p(\ell))$ for $\ell$ small enough.  We will do so by showing that none of the irreducible representations of degree $1$ act on $E_1(S_p(\ell))$ for $\ell$ small enough. Our strategy will consist of considering the quotient of $S_p(\ell)$ by the action of the normal subgroup $\mathbb{Z}_p$ of $G_p$, that we will note $S'_p(\ell)$. Since $\mathbb{Z}_p$ lies in the kernel of every representation of degree $1$, if we can show that $\sigma_1(S_p(\ell))\not=\sigma_1(S'_p(\ell))$ for all $\ell<\epsilon$ for a certain $\epsilon$, then the result will follow.

In order to show that the first non-zero eigenvalues of $S_p(\ell)$ and $S'_p(\ell)$ do not coincide for $\ell$ small enough, we start by showing the following lemma.

\begin{lemma}\label{sigmaS_ppetit}
$$\lim_{\ell \rightarrow 0}\sigma_1(S_p(\ell))= 0.$$
\end{lemma}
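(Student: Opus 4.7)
The plan is to apply the variational characterization
$$\sigma_1(S_p(\ell)) \;=\; \inf\left\{ \frac{\int_{S_p(\ell)} |\nabla f|^2\, dA}{\int_{\partial S_p(\ell)} f^2\, ds} \,:\, f\in H^1(S_p(\ell)),\; \int_{\partial S_p(\ell)} f\,ds = 0\right\}$$
and exhibit a single test function whose Rayleigh quotient tends to $0$ as $\ell\to 0$.

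The key geometric input is the collar lemma. Each gluing curve coming from a $\delta_1$-edge of $\Gamma_p$ becomes, in $S_p(\ell)$, a simple closed geodesic of length $\ell$ sitting in a pair of hyperbolic pants on both sides. As $\ell\to 0$ the standard collar around it has width $w(\ell) = \arcsinh(1/\sinh(\ell/2))\to\infty$ and conformal modulus of order $\pi/\ell$, so the minimum Dirichlet energy required to interpolate between two constants differing by $1$ on the two sides of the collar is $O(\ell)$. The $\delta_2$-gluings, by contrast, take place in the Euclidean part of $B(\ell)$ whose geometry is independent of $\ell$, so a transition there would cost energy of order one. This dictates a test function that is constant on each $\delta_2$-cycle and transitions only across $\delta_1$-collars.

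Concretely, using the cycles $C_0,\ldots,C_{p-1}$ from Section 3, I assign $c_0=1$, $c_1=-1$ and $c_j=0$ for $j\geq 2$, and define $f$ to equal $c_j$ on each block $B_v(\ell)$ with $v\in C_j$ outside the two collars around $\gamma^\pm_{1,v}$; inside each collar $f$ is the harmonic interpolation between the two adjacent constants. The resulting function lies in $H^1(S_p(\ell))$ and is literally constant across every $\delta_2$-gluing. Because $\partial S_p(\ell)$ sits in the Euclidean part of the blocks and is disjoint from every collar, $f|_{\partial S_p(\ell)}$ equals $c_j$ on each boundary component contained in $\textit{Cyl}_j$. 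Writing $L$ for the common boundary length carried by each $\delta_2$-cycle, one gets $\int_{\partial S_p(\ell)} f\,ds = L\sum_j c_j = 0$ and $\int_{\partial S_p(\ell)} f^2\,ds = 2L$, which is bounded away from $0$ independently of $\ell$.

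Finally, the Dirichlet integral splits as a sum over the $p(p-1)$ many $\delta_1$-collars of $S_p(\ell)$; each collar contributes at most $(c_i-c_j)^2$ times the capacity factor $O(\ell)$, so the total is $O(p^2\,\ell)$. Hence $\sigma_1(S_p(\ell))\leq R(f) = O(p^2\,\ell)/(2L) \to 0$, as claimed. The one step that requires extra care is the $H^1$-patching of the locally defined interpolations into a single global function: this is automatic once $f$ is arranged to be constantly $c_j$ on a fixed $\ell$-independent subregion of each $B_v(\ell)$ disjoint from its collars, so that every collar can be treated in isolation with prescribed constant boundary data.
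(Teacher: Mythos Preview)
Your proof is correct and follows essentially the same strategy as the paper: build test functions that are constant on each of the $p$ connected components $M_j=\textit{Cyl}_j$ of $S_p(\ell)\setminus\mathcal{C}(S_p(\ell))$ and interpolate across the $\delta_1$-collars using the collar theorem to make the Dirichlet energy $o(1)$. The only cosmetic differences are that the paper uses two disjointly supported functions $f_1,f_2$ and the $2$-dimensional min-max (rather than a single function orthogonal to constants on $\partial S_p$), and that it employs the linear-in-distance interpolation $\mathrm{dist}(\cdot,\alpha)/w(\ell)$ instead of your harmonic one; both choices lead to the same conclusion by the same mechanism.
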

To prove this lemma, we will need two classical results. 
\begin{thm}\label{vc}(Variational characterization of the Steklov eigenvalues)
$$\sigma_k=\min_{E\in \mathcal{H}_{k+1}}\max_{0\not=u\in E}R_{S}(u),$$
where $\mathcal{H}_{k+1}$ is the set of all $k+1$-dimensional subspaces in the Sobolev space $H^1(S)$ and 
$$R_{S}(u)=\frac{\int_{S}|\nabla u |^2 dA}{\int_{\partial S}|u|^2 ds}.$$
\end{thm}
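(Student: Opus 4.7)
The plan is to follow the standard functional-analytic proof of the min-max characterization of Steklov eigenvalues, which proceeds by recasting the problem via a compact self-adjoint operator and then applying a dimension-counting argument to the Rayleigh quotient. First I would write the Steklov problem in weak form: $(f,\sigma)\in H^1(S)\times\mathbb{R}$ is an eigenpair if and only if $\int_S\nabla f\cdot\nabla v\,dA=\sigma\int_{\partial S}fv\,ds$ for all $v\in H^1(S)$. Introducing the continuous, coercive bilinear form $a(u,v):=\int_S\nabla u\cdot\nabla v\,dA+\int_{\partial S}uv\,ds$ turns $H^1(S)$ into a Hilbert space whose inner product is equivalent to the usual one. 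By the Riesz representation theorem, the operator $K:H^1(S)\to H^1(S)$ defined by $a(Ku,v)=\int_{\partial S}uv\,ds$ is bounded and $a$-self-adjoint, and the Steklov equation becomes $Ku=(\sigma+1)^{-1}u$. Compactness of $K$ follows from the compact embedding of the trace map $H^1(S)\hookrightarrow L^2(\partial S)$, which is available since $S$ is a smooth compact surface with boundary.

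Next, I would invoke the spectral theorem for compact self-adjoint operators to obtain an $a$-orthonormal basis $\{f_k\}_{k\geq 0}$ of $H^1(S)$ consisting of eigenfunctions of $K$ with eigenvalues $\mu_k=(\sigma_k+1)^{-1}$ decreasing to zero, so that $0=\sigma_0<\sigma_1\leq\sigma_2\leq\cdots\nearrow\infty$. A short computation using the eigenvalue equation and $a$-orthogonality shows that $\int_{\partial S}f_if_j\,ds=0$ for $i\neq j$ and $\int_S\nabla f_i\cdot\nabla f_j\,dA=\sigma_i\|f_i\|_{L^2(\partial S)}^2\delta_{ij}$. Moreover, each $f_k$ has nonzero trace on $\partial S$: otherwise, testing the weak equation against $f_k$ itself forces $\nabla f_k=0$ and hence $f_k=0$, contradicting that $f_k$ is an eigenfunction. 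Thus the boundary traces of $\{f_k\}$ form a complete orthogonal system in $L^2(\partial S)$ along which any $u\in H^1(S)$ may be expanded when computing $R_S(u)$.

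For the upper bound, I would use the test subspace $E=\mathrm{span}(f_0,\ldots,f_k)$. For any $u=\sum_{i=0}^{k}c_if_i\in E\setminus\{0\}$, the orthogonality relations collapse the Rayleigh quotient to a weighted average,
$$R_S(u)=\frac{\sum_{i=0}^{k}c_i^2\sigma_i\|f_i\|_{L^2(\partial S)}^2}{\sum_{i=0}^{k}c_i^2\|f_i\|_{L^2(\partial S)}^2}\leq\sigma_k,$$
yielding $\min_E\max_{u\in E\setminus\{0\}}R_S(u)\leq\sigma_k$. For the reverse inequality, let $E\subset H^1(S)$ be any $(k+1)$-dimensional subspace. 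The linear map $E\to\mathbb{R}^{k}$ sending $u$ to $\bigl(\int_{\partial S}uf_i\,ds\bigr)_{i=0}^{k-1}$ has non-trivial kernel by dimension, producing $0\neq u\in E$ whose $L^2(\partial S)$-expansion is supported on indices $\geq k$; the analogous computation then gives $R_S(u)\geq\sigma_k$, so $\max_{u\in E\setminus\{0\}}R_S(u)\geq\sigma_k$ for every admissible $E$. The main technical obstacle is the spectral step, namely verifying compactness and completeness of $K$ via the compact trace embedding $H^1(S)\hookrightarrow L^2(\partial S)$; once the eigenbasis is in hand, the min-max identity reduces to routine Rayleigh-quotient bookkeeping.
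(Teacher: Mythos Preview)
Your argument is the standard functional-analytic derivation of the min--max principle for the Steklov spectrum and is essentially correct. One small point worth tightening in the lower-bound step: the eigenfunctions $\{f_k\}$ of your operator $K$ with nonzero eigenvalue do \emph{not} span all of $H^1(S)$, since $K$ has the infinite-dimensional kernel $H^1_0(S)$. For an arbitrary $(k+1)$-dimensional $E\subset H^1(S)$, the element $u$ you produce should therefore be decomposed as $u=u_0+\sum_{i\geq k}c_if_i$ with $u_0\in H^1_0(S)$; the cross terms $\int_S\nabla u_0\cdot\nabla f_i\,dA$ vanish by $a$-orthogonality of the kernel and range of $K$, and the extra $\int_S|\nabla u_0|^2$ only helps the inequality. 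Equivalently, one can invoke the Dirichlet principle to pass to the harmonic extension of $u|_{\partial S}$. With this adjustment your bookkeeping goes through.

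As for the comparison: the paper does not actually prove Theorem~\ref{vc}. It is stated there as one of ``two classical results'' needed for Lemma~\ref{sigmaS_ppetit} and is used as a black box. So there is no approach in the paper to compare yours against; you have supplied a proof where the paper simply cites the result.
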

The second important result is an application of the collar theorem (\cite{PB}, Theorem 4.1.1). Let
$$\mathcal{C}(S_p(\ell)):= \bigcup_{g\in G_p}\gamma_{1,g}^\pm  \subset S_p(\ell)$$
and the neighborhood of $x\in S_p(\ell)$
$$\mathcal{T}(x):=\{q\in S_p(\ell) \;|\; dist(q,x)\leq w(\ell)\}$$
where
$$w(\ell):= \arcsinh(\coth(\tfrac{\ell}{2})).$$
Since the $\gamma_{1,g}^\pm \subset S_p(\ell)$ are gluing points of hyperbolic pants of constant curvature $-1$ the collar theorem applies locally to $\mathcal{T}(\alpha)$ and we get the following :

\begin{thm}\label{collar} For connected components $\alpha,\beta\subset \mathcal{C}(S_p(\ell))$, then $\mathcal{T}(\alpha)\cap \mathcal{T}(\beta)=\emptyset$ if and only if $\alpha\cap \beta = \emptyset$. Furthermore, for all connected components $\alpha\in \mathcal{C}(S_p)$, $\mathcal{T}(\alpha)$ is isometric to the cylinder $[-w(\ell),w(\ell)]\times \mathbb{S}^1$ with the Riemannian metric $ds^2=d\rho^2+\ell^2 \cosh^2(\rho)dt^2$ on the Fermi coordinates of the cylinder based on $\alpha$ parametrized with speed $\ell$.
\end{thm}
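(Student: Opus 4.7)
The plan is to reduce Theorem~\ref{collar} to the classical collar theorem (Theorem~4.1.1 of~\cite{PB}) applied within the purely hyperbolic portion of $S_p(\ell)$, namely the union of the hyperbolic pants $\mathcal{P}_v(\ell)$ attached to the vertices $v$ of $\Gamma_p$.

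First, I would identify each connected component $\alpha\subset\mathcal{C}(S_p(\ell))$ as a simple closed geodesic of length $\ell$. By construction, every such $\alpha$ arises as the identification of two length-$\ell$ geodesic boundary components $\gamma^+_{1,v}$ and $\gamma^-_{1,w}$ of adjacent hyperbolic pants. Since every hyperbolic gluing in the construction is performed with no twist, the resulting curve $\alpha$ is a simple closed geodesic of length $\ell$ in the hyperbolic subsurface $P_{v,w}:=\mathcal{P}_v(\ell)\cup_\alpha\mathcal{P}_w(\ell)$.

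Second, for the isometry claim, I would isometrically embed $P_{v,w}$ into a closed hyperbolic surface, for instance by doubling $P_{v,w}$ along its remaining length-$2\pi$ geodesic boundary components. Applying Theorem~4.1.1 of~\cite{PB} to $\alpha$ in this closed ambient surface yields an isometric embedding of the tubular neighborhood of $\alpha$ of width $w(\ell)$ onto the standard hyperbolic cylinder $[-w(\ell),w(\ell)]\times\mathbb{S}^1$ with metric $d\rho^2+\ell^2\cosh^2(\rho)\,dt^2$ in Fermi coordinates. A second application of the same theorem---to $\alpha$ and the length-$2\pi$ doubled geodesics simultaneously---shows that the collar of $\alpha$ is disjoint from the doubling locus, so it lies entirely inside the hyperbolic region $P_{v,w}$ of $S_p(\ell)$. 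This yields the desired isometric description of $\mathcal{T}(\alpha)$.

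Finally, for the equivalence $\mathcal{T}(\alpha)\cap\mathcal{T}(\beta)=\emptyset\Leftrightarrow\alpha\cap\beta=\emptyset$, the ``only if'' direction is immediate from $\alpha\subset\mathcal{T}(\alpha)$. For the ``if'' direction, let $\alpha,\beta$ be disjoint components of $\mathcal{C}(S_p(\ell))$, supported in pairs of adjacent pants $P_{v,w}$ and $P_{v',w'}$. If these pairs share no common pants, the collars $\mathcal{T}(\alpha)\subset P_{v,w}$ and $\mathcal{T}(\beta)\subset P_{v',w'}$ are automatically disjoint. Otherwise, $\alpha$ and $\beta$ correspond to the two distinct length-$\ell$ boundary geodesics of a common pants $\mathcal{P}$, and the classical collar theorem applied in the doubled $\mathcal{P}$ (which contains both $\alpha$ and $\beta$ as disjoint simple closed geodesics of length $\ell$) guarantees that their collars of width $w(\ell)$ are disjoint. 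The main obstacle is the quantitative control required in the second step: ensuring that $w(\ell)$ is smaller than the distance from $\alpha$ to the non-hyperbolic part of $S_p(\ell)$. This is precisely what the collar theorem provides, since it forces $\alpha$ to lie at distance at least $w(\ell)+w(2\pi)$ from the length-$2\pi$ boundaries that form the interface with the interpolating cylinders.
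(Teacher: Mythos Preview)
Your proposal is correct and follows the same approach as the paper, which in fact does not give a detailed proof at all: it simply asserts that ``the collar theorem applies locally to $\mathcal{T}(\alpha)$'' since the $\gamma_{1,g}^\pm$ are gluing curves between hyperbolic pants of curvature $-1$, and then states Theorem~\ref{collar} as an immediate consequence of Buser's Theorem~4.1.1. Your doubling argument and the verification that the width-$w(\ell)$ collars remain inside the hyperbolic region (and away from the interpolating cylinders and Euclidean squares) supply exactly the details the paper leaves implicit.
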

We can now prove lemma \ref{sigmaS_ppetit} using theorem \ref{vc} and lemma \ref{collar}.
\begin{proof}
Our goal will be to build two test functions $f_1,f_2\in H^1(S_p(\ell))$ such that $R_{S_p(\ell)}(f_i)$ goes to zero as $\ell$ goes to zero. 

Observe that $S_p - \mathcal{C}(S_p(\ell))$ has $p$ connected components. This is a direct consequence of the construction of $S_p(\ell)$ around the Cayley graph of $G_p$ with generators $\delta_1,\delta_2$. We will note these connected components $M_i$ for $i \in \{1,2,\dots, p\}$ and $\mathcal{C}(M_i)\subset \mathcal{C}(S_p(\ell))$ the union of geodesics joining $M_i$ to its complement in $S_p(\ell)$. Thus $\mathcal{C}(M_i)$ is the union of $2p-2$ disjoint geodesics of lengths $\ell$.

Let
$$M_1^0:=M_1-\bigcup_{x\in \mathcal{C}(M_1)}\mathcal{T}(x)\cap M_1,$$
$$M_2^0:=M_2-\bigcup_{x\in \mathcal{C}(M_2)}\mathcal{T}(x)\cap M_2.$$
and
$$f_1(p):=
  \begin{cases}
       1 &\quad\text{if } q\in M_1^0\\
       0 &\quad\text{if } q\in M_1^c\\
       \frac{dist(q,\alpha)}{ w(\ell)} &\quad\text{if } q\in \mathcal{T}(\alpha)\cap M_1 \text{ for some connected component } \alpha\in \mathcal{C}(M_1)\\      
     \end{cases}$$
$$f_2(p):=
  \begin{cases}
       1 &\quad\text{if } q\in M_2^0\\
       0 &\quad\text{if } q\in M_2^c\\
        \frac{dist(q,\alpha)}{ w(\ell)} &\quad\text{if } q\in \mathcal{T}(\alpha)\cap M_2 \text{ for some connected component } \alpha\in \mathcal{C}(M_2)\\       
     \end{cases}$$
Since both functions have disjoint supports, $span\{f_1,f_2\}\subset H^1(S_p(\ell))$ is a subspace of dimension $2$. We can now calculate $R_{S_p(\ell)}(f_i)$. 
$$
R_{S}(f_1)=\frac{\int_{S_p(\ell)}|\nabla f_1 |^2 dV}{\int_{\partial S_p(\ell)}|f_1|^2 dS}
$$$$\leq 
\frac{\sum_{\alpha}\int_{\mathcal{T}(\alpha)\cap M_1}|\nabla f_1 |^2 dV}{A}$$
\begin{equation}\label{eq1}=\frac{(2p-2)}{A}\int_{\mathcal{T}(\hat{\alpha})\cap M_1}|\nabla f_1 |^2 dV
\end{equation}
where the sum is taken over all the connected components $\alpha\subset \mathcal{C}(M_1)$. Here, $A$ is a constant independent of $\ell$ and $\hat{\alpha}$ is a given connected component of $\mathcal{C}(M_1)$. This last integral yields

$$
\int_{\mathcal{T}(\hat{\alpha})\cap M_1}|\nabla f_1 |^2 dV=\frac{1}{w(\ell)^2}\int_{\mathcal{T}(\hat{\alpha})\cap M_1}|\nabla dist(q,\hat{\alpha}) |^2 dV
$$
\begin{equation}\label{eq2}=\frac{1}{w(\ell)^2}\int_{\mathcal{T}(\hat{\alpha})\cap M_1} dV=\frac{Area(\mathcal{T}(\hat{\alpha})\cap M_1)}{w(\ell)^2}. 
\end{equation}
We now need to calculate $Area(\mathcal{T}(\hat{\alpha})\cap M_1)=\tfrac{Area(\mathcal{T}(\hat{\alpha}))}{2}$. By lemma \ref{collar}, $\mathcal{T}(\hat{\alpha})$ is isometric to the cylinder $[-w(\ell),w(\ell)]\times \mathbb{S}^1$ with the Riemannian metric $ds^2=d\rho^2+\ell^2 \cosh^2(\rho)dt^2$ on the Fermi coordinates of the cylinder based on $\hat{\alpha}$ parametrized with speed $\ell$. Thus
$$
Area(\mathcal{T}(\hat{\alpha}))=\ell^2 \int_{0}^{1}\int_{-w(\ell)}^{w(\ell)}\cosh^2(\rho) d\rho dt$$
\begin{equation}\label{eq3} =\frac{\ell^2}{2}\int_{0}^{1} [\rho + \cosh(\rho)\sinh(\rho)]_{\rho=-w(\ell)}^{\rho=w(\ell)}dt =\ell^2 w(\ell).
\end{equation}
Piecing equations (\ref{eq1}), (\ref{eq2}) and (\ref{eq3})  together, we obtain the upper bound
$$R_{S}(f_1)\leq \frac{(2p-2)\ell^2}{4A w(\ell)}.$$
Since $w(\ell)$ goes to $\infty$ when $\ell$ goes to $0$, we have that
$$\lim_{\ell\rightarrow 0}R_{S}(f_1)=0. $$
The calculation for $f_2$ is similar. By the variational characterization of the Steklov eigenvalues, the result follows. 
\end{proof}
What remains to show is that $\sigma_1(S'_p(\ell)$ does not go to zero when $\ell$ goes to zero. Before proving this fact directly, we will discuss some basic properties of the compact surface
$$S'_p(\ell):= S_p(\ell) / \mathbb{Z}_p.$$
Since 
 $$G_p:= \mathbb{Z}_p \rtimes \mathbb{Z}_p^*$$
 it follows that 
 $$G_p/ \mathbb{Z}_p \cong \mathbb{Z}_p^*$$
via the isomorphism induced by the quotient map.
Hence $S'_p(\ell)$ is composed of copies of $B(\ell)$ that we will note $B_{\bar{\delta}_2^i}(\ell)$ for $1\leq i \leq p-1$ corresponding to the equivalence classes of elements $\delta_2^i$ in $G_p$ under the quotient. Observe that the $B_{\bar{\delta}_2^i}(\ell)$ will be glued together along $\gamma_{2,\bar{\delta}_2}^+$ and $\gamma_{2,\bar{\delta}_2}^-$ in $S'_p(\ell)$ in the same way that the $B_{\delta_2^i}(\ell)$ are glued together along $\gamma_{2,\delta_2^i}^+$ and $\gamma_{2,\delta_2^i}^-$ in $S_p(\ell)$.  In order to get a better idea of the properties of $S'_p(\ell)$, we will study how the  $B_{\bar{\delta}_2^i}(\ell)$  are glued together along $\gamma_{1,\bar{\delta}_2^i}^+$ and $\gamma_{1,\bar{\delta}_2^i}^-$.

Let $j$ be an integer such that $1\leq j \leq p-1$. Then there exist $\kappa$, an integer such that 
$$j\kappa\equiv -1 \mod p.$$
Observe that since $\bar{\delta_2^j}=\overline{\delta_1^\kappa \delta_2^j}$ then $B_{\bar{\delta_2^j}}(\ell)= B_{\bar{\delta_1^\kappa \delta_2^j}}(\ell)$. Note also that in $S_p(\ell)$, $\gamma_{1,\delta_1^\kappa \delta_2^j}^+$ is glued along $\gamma_{1,\delta_1^\kappa \delta_2^j \delta_1}^-$. Since we have that
$$\delta_1^\kappa \delta_2^j \delta_1=\delta_2^j \delta_1^{\kappa j+1}=\delta_2^j,$$
it follows that in the quotient $S'_p(\ell)$, $\gamma_{1,\bar{\delta}_2^j}^+$ is glued along $\gamma_{1,\bar{\delta}_2^j}^-$ for all $j$. All the gluings are done with no twist, since the original gluings had no twist.
\begin{figure}[H]
\centering
\includegraphics[scale=0.2]{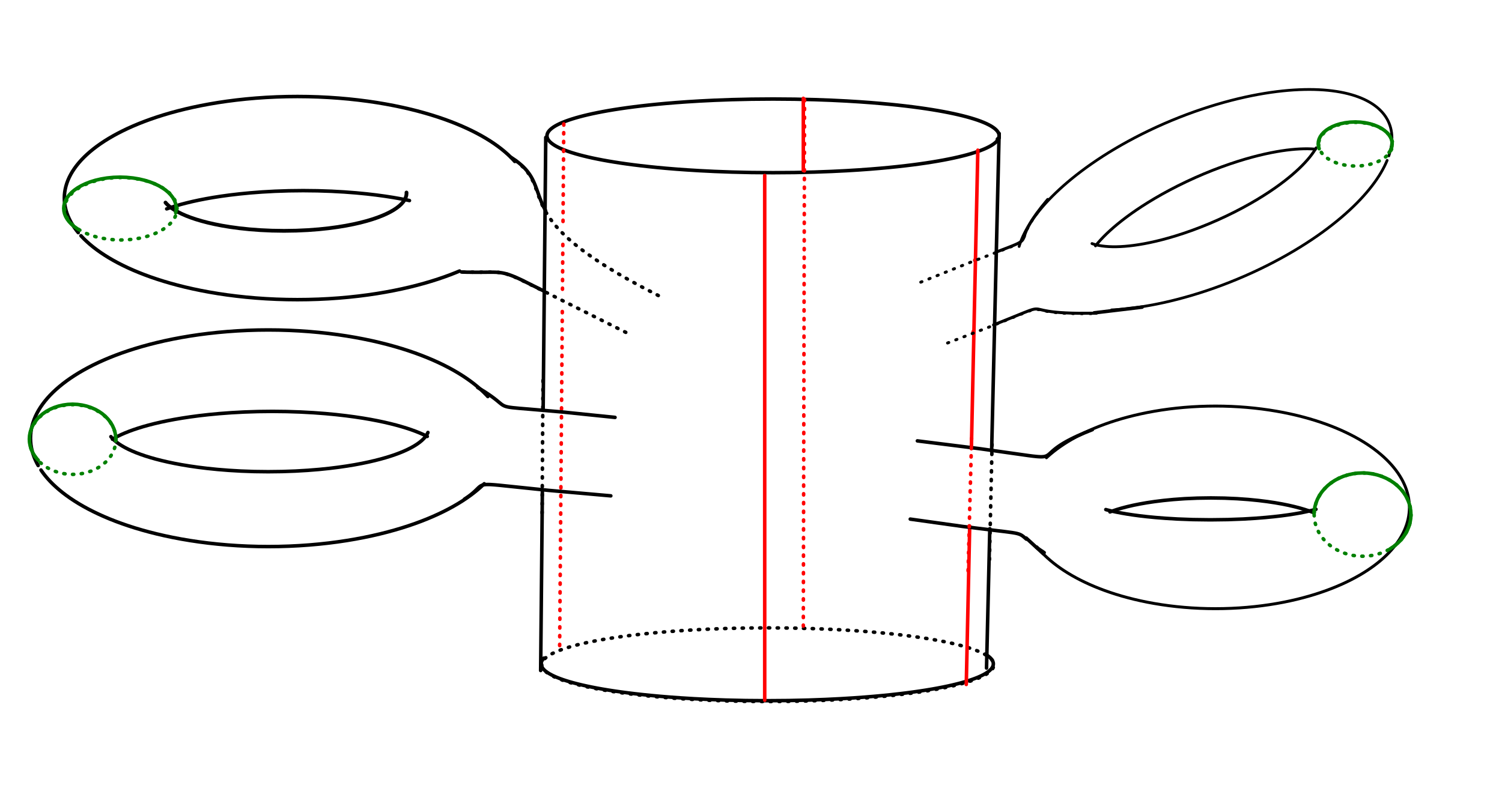}
	\caption{The surface $S'_5$}

\end{figure}
It follows that $S'_p(\ell)$ can be constructed in the following way. Let $B'(\ell)$ be the building block obtained from $B(\ell)$ by glueing $\gamma_1^+$ along $\gamma_1^-$. Then $S'_p(\ell)$ is the surface obtained by gluing the building blocks $B'(\ell)$ along $\gamma_2^\pm$ following the Cayley graph of $\mathbb{Z}_p^*$ with one generator.
\begin{figure}[H]
\centering
\includegraphics[scale=0.2]{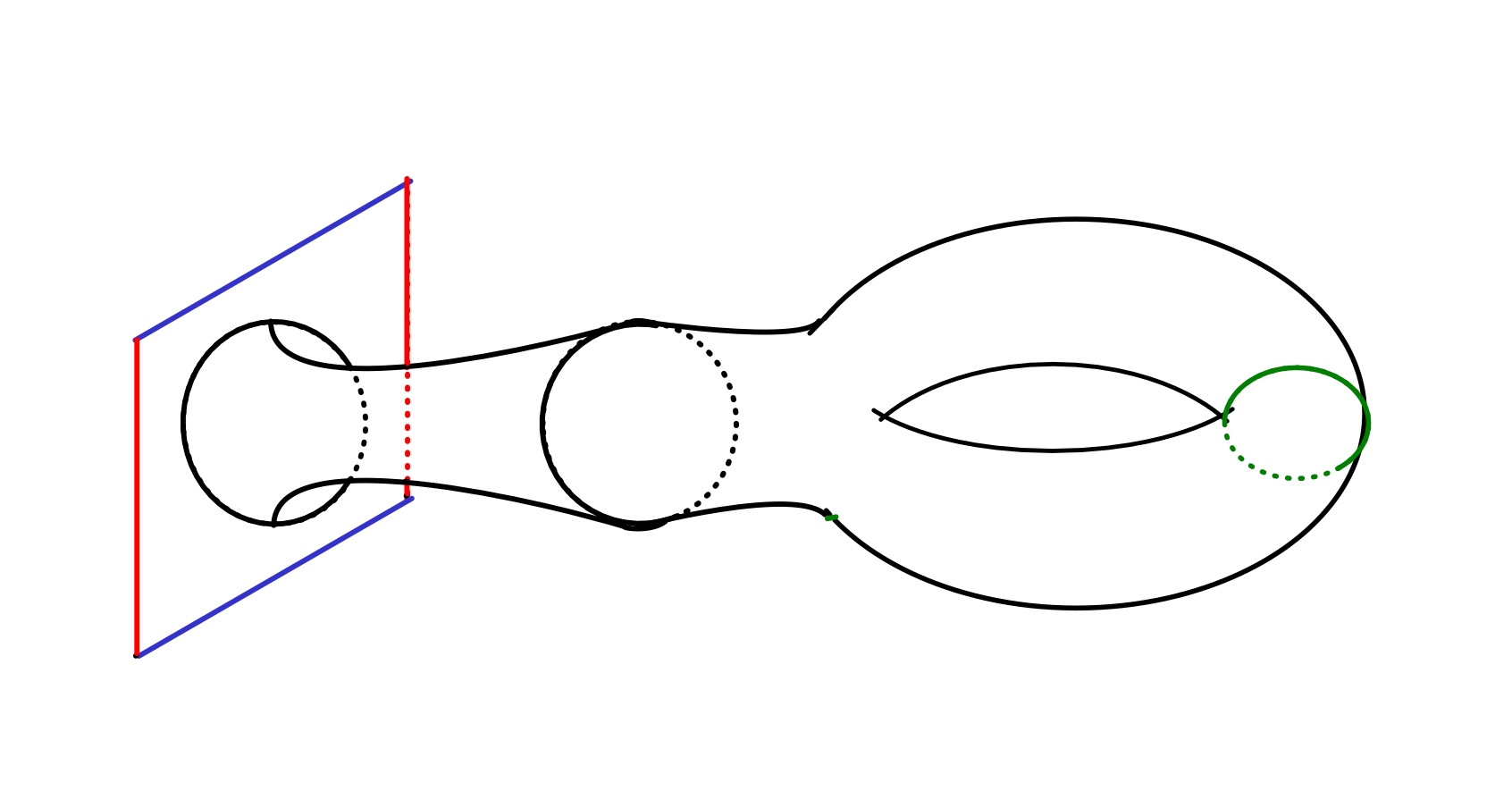}
	\caption{The building block $B'(\ell)$}
\end{figure}
A direct consequence of this discussion is that $S'_p(\ell)-\cup_{g\in G_p}\gamma_{1,\bar{g}}^\pm$ possesses a single connected component.
We will use this information in order to prove the following lemma and complete the proof of the main result.

  




\begin{lemma}\label{lowerb}
There exist a constant $C=C(p)>0$ such that for all $\ell>0$,
$$\sigma_1(S'_p(\ell))\geq C.$$
\end{lemma}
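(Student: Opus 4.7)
The plan is to apply the variational characterization
$$\sigma_1(S'_p(\ell)) = \inf \frac{\int_{S'_p(\ell)} |\nabla u|^2\, dV}{\int_{\partial S'_p(\ell)} u^2\, ds}$$
where the infimum runs over $u\in H^1(S'_p(\ell))$ with $\int_{\partial S'_p(\ell)} u\, ds=0$, together with the key geometric observation (highlighted in the text preceding the lemma) that $\partial S'_p(\ell)$ is contained in the union of the flat perforated squares $\mathcal{S}_v\subset B'_v(\ell)$, whose Euclidean geometry is \emph{independent} of $\ell$, and that the gluing graph of the $B'_v(\ell)$ (the Cayley graph of $\mathbb{Z}_p^*$ with one generator) is connected.

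Let $u$ be admissible and write $\bar u_v$ for the mean of $u$ on $\mathcal{S}_v$. First I would apply the Poincar\'e--Wirtinger inequality and the trace theorem on the fixed Lipschitz domain $\mathcal{S}$ to obtain a constant $C_0$, independent of $\ell$, with
$$\|u - \bar u_v\|_{L^2(\partial \mathcal{S}_v)}^2 \leq C_0\, \|\nabla u\|_{L^2(\mathcal{S}_v)}^2.$$
Any two blocks $B'_v(\ell), B'_w(\ell)$ that are adjacent in the gluing graph share a curve $\gamma_2$ of fixed length $4$ along which $u$ takes the same values from either side, so the triangle inequality on $\gamma_2$ together with the above estimate gives $|\bar u_v - \bar u_w|^2 \leq C_1\, \|\nabla u\|_{L^2(\mathcal{S}_v\cup \mathcal{S}_w)}^2$. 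Chaining this bound around the cycle of length $p-1$ yields $|\bar u_v - \bar u_w|^2 \leq C_2(p)\, \|\nabla u\|_{L^2(S'_p(\ell))}^2$ uniformly in $v,w$. Since $\partial \mathcal{S}_v \cap \partial S'_p(\ell) = b_{1,v}\cup b_{2,v}$ has the same fixed length $8$ for every $v$, the mean-zero constraint on the boundary gives $\bigl|\sum_v \bar u_v\bigr| \leq C'(p)\,\|\nabla u\|_{L^2}$; combined with the previous estimate this forces $|\bar u_v| \leq C_3(p)\, \|\nabla u\|_{L^2(S'_p(\ell))}$ for every $v$. Expanding
$$\int_{\partial S'_p(\ell)} u^2\, ds \leq 2 \sum_v \Bigl(8\, |\bar u_v|^2 + \|u-\bar u_v\|_{L^2(\partial \mathcal{S}_v)}^2\Bigr) \leq C_4(p)\, \|\nabla u\|_{L^2(S'_p(\ell))}^2$$
then yields $\sigma_1(S'_p(\ell)) \geq 1/C_4(p)$, which is the desired bound.

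The main obstacle — and the reason the lemma is plausible at all — is ensuring that the Poincar\'e/trace constant $C_0$ can be taken uniformly in $\ell$. This is the whole point of the construction of $B'(\ell)$: the Euclidean perforated square $\mathcal{S}$ is literally the same fixed piece of every block regardless of $\ell$, so $C_0$ is a fixed geometric quantity. All the $\ell$-dependent geometry is concentrated inside the interpolating cylinder and the hyperbolic pants $\mathcal{P}(\ell)$, which degenerate as $\ell\to 0$ but sit at positive distance from $\partial S'_p(\ell)$ and never enter the estimates. Connectedness of the gluing graph — which \emph{fails} for $S_p(\ell)$ (explaining why $\sigma_1(S_p(\ell))\to 0$ there) and \emph{holds} for $S'_p(\ell)$ by the discussion preceding the lemma — is precisely what allows one to control all the averages $\bar u_v$ simultaneously from the single mean-zero constraint on the boundary.
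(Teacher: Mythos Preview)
Your argument is correct and rests on exactly the same geometric observation the paper exploits: the boundary $\partial S'_p(\ell)$ sits entirely inside the union $A=\bigcup_v\mathcal{S}_v$ of the Euclidean perforated squares, this union is connected (via the shared sides $\gamma_2$), and its geometry is independent of $\ell$.

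The paper, however, packages the analysis differently. Rather than running the Poincar\'e--trace estimates block by block and chaining the averages $\bar u_v$, it introduces the mixed Steklov--Neumann problem on the fixed domain $A$ and observes that restricting a test function from $S'_p(\ell)$ to $A$ only decreases the Rayleigh quotient (same boundary integral, smaller Dirichlet energy), giving the one-line comparison
\[
\sigma_1^{N}(A)\;\le\;\sigma_1(S'_p(\ell)).
\]
Since $A$ does not depend on $\ell$, the constant $C=\sigma_1^{N}(A)>0$ follows immediately. In effect the positivity of $\sigma_1^{N}(A)$ \emph{is} the Poincar\'e-type inequality you prove by hand; the paper simply invokes it as a single eigenvalue rather than unpacking it. Your route is more elementary and self-contained (no auxiliary spectral problem is introduced), while the paper's is shorter and makes the monotonicity principle transparent. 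Both arguments use the connectedness of $A$ in the same essential way---in yours it is what lets you chain the $\bar u_v$; in the paper's it is what guarantees $\sigma_1^{N}(A)>0$.
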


In order to prove this, we will need to introduce the mixed Steklov-Neumann problem. Let $A$ be a domain on the surface $S$ such that $\partial S \subset A$. We note $\partial_I A= \partial A -\partial S$ the interior boundary of $A$. The Steklov-Neumann problem on $A$ is the eigenvalue problem that consist of finding $\sigma^N\in \mathbb{R }$ and $f:\bar{A} \rightarrow \mathbb{R}$ non-zero such that:
$$  \begin{cases}
       \Delta f=0  &\quad\text{in } A,\\
       \nd f=\sigma^N f &\quad\text{on } \partial S,\\
       \nd f=0 &\quad\text{on } \partial_I A. \\           
     \end{cases}$$
Here $\nd$ is the outward normal derivative on $\partial A$. As in the Steklov problem, the eigenvalues of this mixed problem form a discrete sequence $0=\sigma_0^N(A) < \sigma_1^N(A) \leq ... \nearrow \infty$. We also have the following characterization.
\begin{thm}[Variational characterization of the Steklov-Neumann eigenvalues]\label{StekNew}
$$\sigma_k^N(A)=\min_{E\in \mathcal{H}_{k+1}}\max_{0\not=u\in E}R_{A}^N(u),$$
where $\mathcal{H}_{k+1}$ is the set of all $k+1$-dimensional subspaces in the Sobolev space $H^1(A)$ and 
$$R_{A}^N(u)=\frac{\int_{A}|\nabla u |^2 dV_{A}}{\int_{\partial S}|u|^2 dV_{\partial S}}.$$
\end{thm}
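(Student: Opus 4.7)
The plan is to establish the min-max formula via the standard spectral-theoretic argument associated with a pair of bilinear forms. First, integrating by parts shows that $(f,\sigma^N)$ is an eigenpair of the mixed Steklov-Neumann problem if and only if $f\in H^1(A)\setminus\{0\}$ and
$$a(f,v):=\int_A \nabla f\cdot\nabla v\,dV \;=\; \sigma^N\int_{\partial S} f v\,ds \;=:\; \sigma^N b(f,v)$$
for every $v\in H^1(A)$; the Neumann condition on $\partial_I A$ appears automatically as a natural boundary condition in the weak form, while the Steklov condition on $\partial S$ is encoded in the right-hand side.

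Next, I would encode this as the eigenvalue problem of a single compact self-adjoint operator. Equip $H:=H^1(A)$ with the inner product $\langle u,v\rangle_H := a(u,v)+b(u,v)$, which, by a Poincar\'e--trace inequality on the Lipschitz domain $A$, is equivalent to the standard $H^1$-norm. Define $B:H\to H$ by the Riesz representation identity $\langle Bu,v\rangle_H = b(u,v)$. The continuity of the trace $H^1(A)\to H^{1/2}(\partial S)$ together with the compact embedding $H^{1/2}(\partial S)\hookrightarrow L^2(\partial S)$ make $B$ compact, self-adjoint, and non-negative, with $0\le \langle Bu,u\rangle_H\le\langle u,u\rangle_H$. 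A direct rearrangement of the weak equation $a(f,v)=\sigma^N b(f,v)$ as $\langle f,v\rangle_H - \langle Bf,v\rangle_H = \sigma^N\langle Bf,v\rangle_H$ shows that the weak formulation is equivalent to $Bf=\mu f$ with $\mu = 1/(1+\sigma^N)$.

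Finally, the spectral theorem for compact self-adjoint operators produces a Hilbert basis of $H$ of eigenfunctions of $B$ with eigenvalues $1=\mu_0\ge\mu_1\ge\cdots\searrow 0$, establishing the discreteness $0=\sigma_0^N<\sigma_1^N\le\cdots\nearrow\infty$ (the top eigenvalue $\mu_0=1$ corresponds to constants). Applying the classical Courant--Fischer min-max theorem to $B$ yields
$$\mu_k \;=\; \max_{E\in\mathcal{H}_{k+1}}\,\min_{0\neq u\in E}\,\frac{\langle Bu,u\rangle_H}{\|u\|_H^2} \;=\; \max_{E\in\mathcal{H}_{k+1}}\,\min_{0\neq u\in E}\,\frac{b(u,u)}{a(u,u)+b(u,u)},$$
and inverting and subtracting the constant $1$ (which commutes with the outer min and inner max) gives
$$\sigma_k^N \;=\; \frac{1}{\mu_k}-1 \;=\; \min_{E\in\mathcal{H}_{k+1}}\,\max_{0\neq u\in E}\,\frac{a(u,u)}{b(u,u)} \;=\; \min_{E\in\mathcal{H}_{k+1}}\,\max_{0\neq u\in E}\,R_A^N(u),$$
which is the stated variational characterization. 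The only nonroutine ingredient is the compactness of the trace $H^1(A)\hookrightarrow L^2(\partial S)$; this is the main technical point, but in our setting $A$ is a piecewise-smooth Lipschitz subdomain of a smooth surface, so the standard trace theorem and Rellich--Kondrachov compactness apply and the argument goes through.
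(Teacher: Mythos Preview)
The paper does not actually prove this theorem: like the variational characterization of the ordinary Steklov eigenvalues stated just above it, it is recorded as a standard background fact and used without justification. Your argument is correct and is precisely the textbook route one would take to supply a proof: pass to the weak formulation, realize it as the eigenvalue problem for a compact self-adjoint operator $B$ on $H^1(A)$ via compactness of the trace $H^1(A)\hookrightarrow L^2(\partial S)$, and read off the min--max from Courant--Fischer. The only minor wrinkle worth flagging is that $R_A^N(u)$ is formally undefined when $u|_{\partial S}=0$; in your setup these are exactly the elements of $\ker B$, and the usual convention $R_A^N(u)=+\infty$ for such $u$ makes the inversion step from $\mu_k$ to $\sigma_k^N$ go through cleanly. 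Since the paper offers no proof of its own, there is nothing further to compare.
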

With this result in hand, we can prove lemma \ref{lowerb}.
\begin{proof}
By a direct comparison of $R_{A}^N$ and $R_{S}$, we obtain the inequality
$$\sigma_1^N(A)\leq \sigma_1(S)$$ for all suitable $A\subset S$. Thus it suffices to find a candidate $A\subset S'_p$ such that $A$ is invariant under change of $\ell$. We will start by identifying such a domain in the building block $B'(\ell)$ and then lift it to $S'_p(\ell)$.

From the discussion above, $B'(\ell)$ is obtained by gluing the euclidian punctured square $\mathcal{S}$ along a cylinder $\mathcal{C}$ and a pair of hyperbolic paints $\mathcal{P}(\ell)$ with one boundary component on length $1$ and two of length $\ell$. Thus, $\mathcal{S}$ when seen as a subset of $B'(\ell)$ is invariant under changes of $\ell$. Let
$$\mathcal{S}_{\bar{g}} \subset B_{\bar{g}}(\ell) \subset S'_p(\ell)$$
be the copy of $\mathcal{S}$ in $B_{\bar{g}}(\ell)$ for $\bar{g}\in \mathbb{Z}_p^*$. Let
$$A:= \bigcup_{\bar{g}\in \mathbb{Z}_p^*}\mathcal{S}_{\bar{g}} .$$
and 
$$\mathcal{C}(S'_p(\ell)):= \bigcup_{\bar{g}\in \mathbb{Z}_p^*}\gamma_{1,\bar{g}}^\pm  \subset S'_p(\ell).$$

Note that $A$ is still connected since $S'_p-\mathcal{C}(S'_p)$ is connected. We also have $\partial S'_p \subset A$. Furthermore, we can observe that $A$ is invariant under the change of $\ell$. Thus theorem \ref{StekNew} yields
$$0< \sigma_1^N(A) \leq \sigma_1(S'_p)$$
completing the proof.
\end{proof}

\begin{figure}[H]
\centering
\begin{subfigure}{.5\textwidth}
  \centering
  
\includegraphics[scale=0.18]{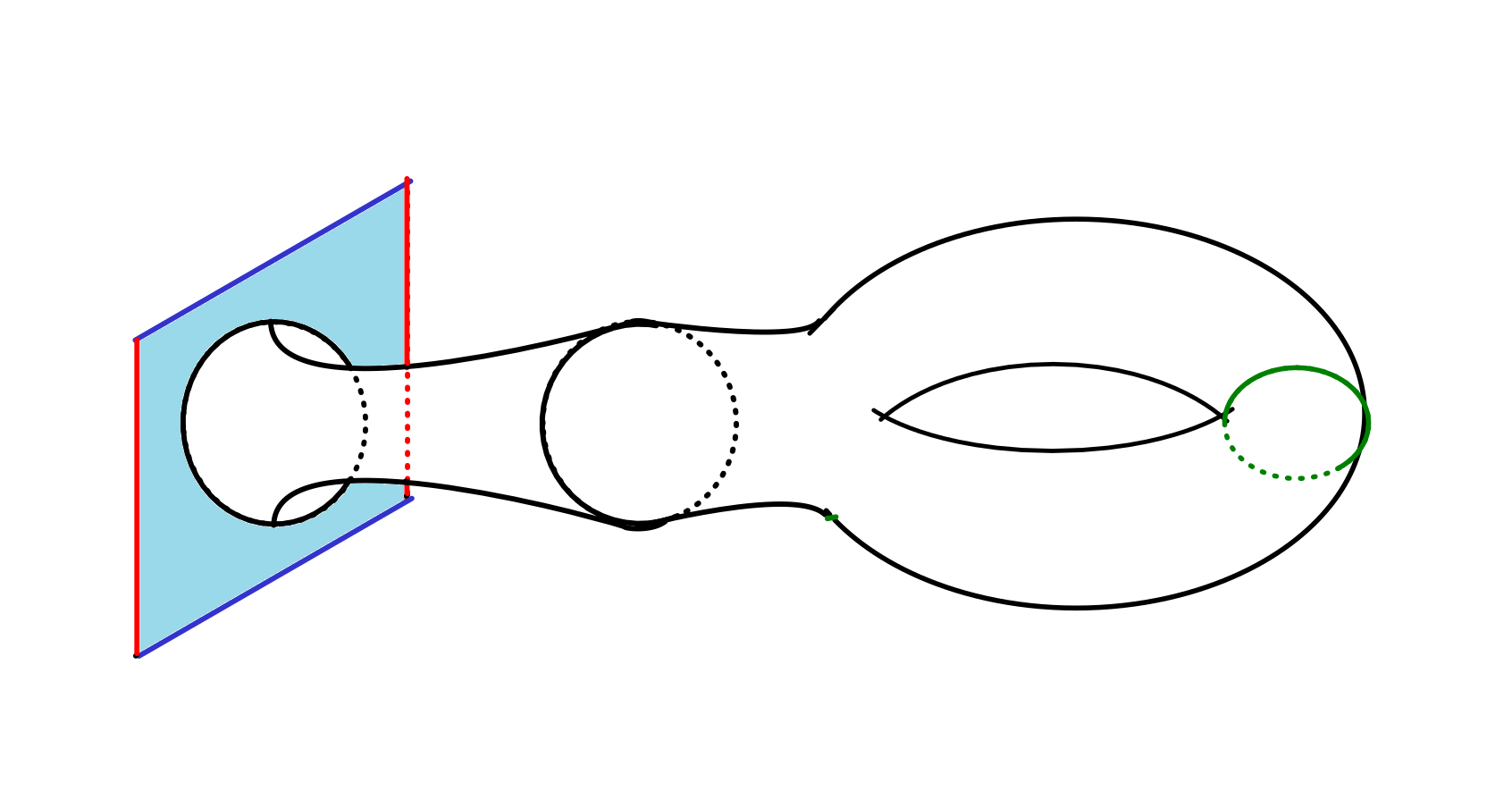}
  \label{fig:sub1}
\end{subfigure}%
\begin{subfigure}{.5\textwidth}
  \centering
\includegraphics[scale=0.13]{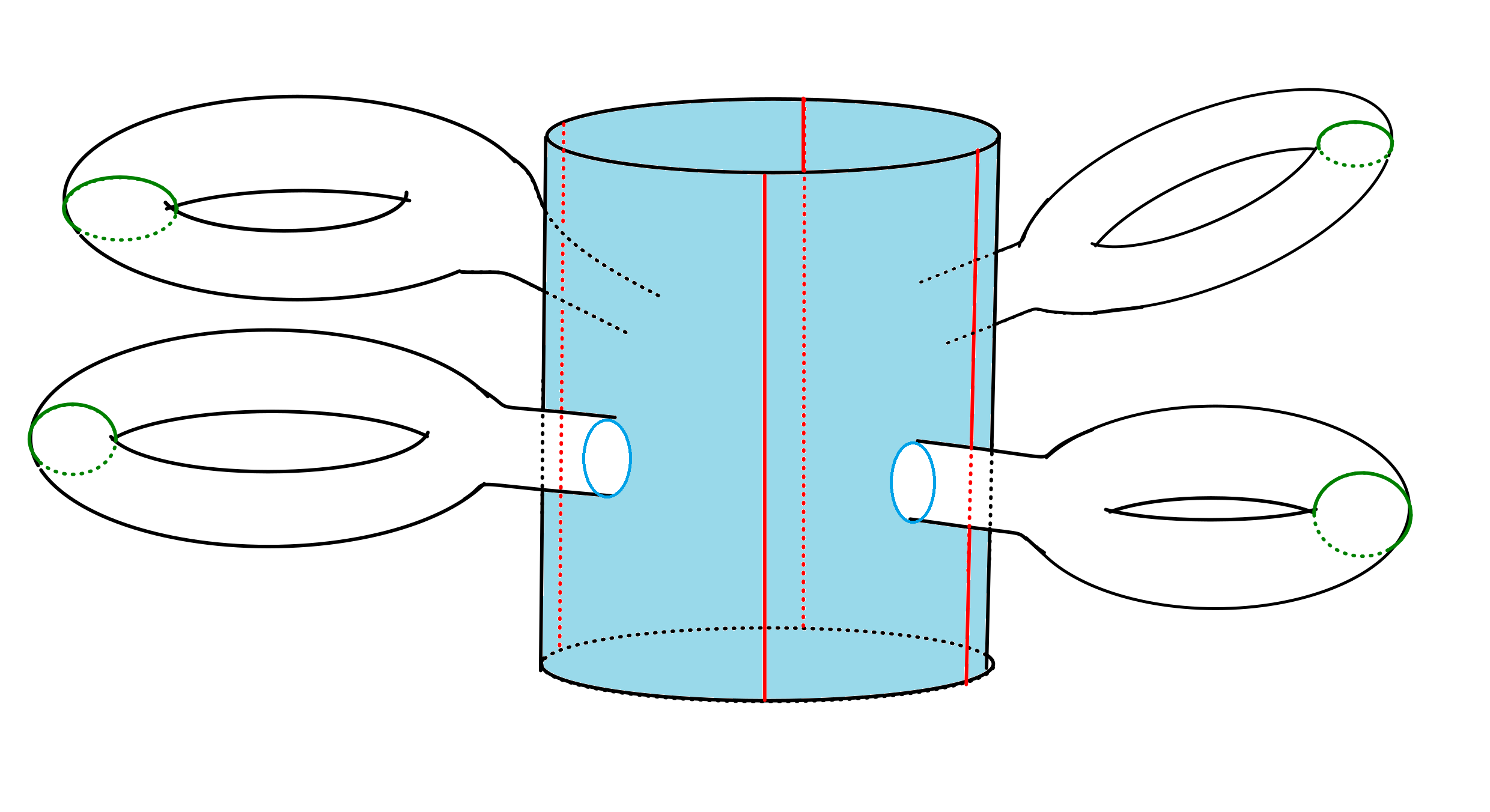}
  \label{fig:sub2}
\end{subfigure}
\caption{$\mathcal{S}$ in $B'(\ell)$ and $A$ in $S'_5$ respectively}
\label{fig:test}
\end{figure}
All that is left to do to obtain theorem $\ref{thm:main}$ is to use lemmas \ref{sigmaS_ppetit} and \ref{lowerb} in conjunction with a representation theory argument. By the the two aforementioned lemmas, we know that for $\ell$ small enough, $$\sigma_1(S_p)\not=\sigma_1(S'_p).$$
Fix this $\ell$ as such. Since $\mathbb{Z}_p$ is in the kernel of every representation of degree $1$ of $G_p$, if these representations were to act on $E_1(S_p)$ we would have that $$\sigma_1(S_p)=\sigma_1(S_p/\mathbb{Z}_p)=\sigma_1(S'_p).$$
Howerver,, we showed that it is not the case. It means that the representation of degree $1$ of $G_p$ are not acting on $E_1(S_p)$. The last option remaining is that the only representation left, of degree $p-1$, acts on $E_1(S_p)$. Thus we get that the dimension of $E_1(S_p)$ is at least $p-1$. It implies that $m_1(S_p)\geq p-1$ and theorem \ref{thm:main} follows.

\subsection*{Acknowledgments}
The author would like to thank Bruno Colbois for comments on a preliminary version of this paper. This work is part of the authors PhD thesis, under the supervision of Alexandre Girouard.

\bibliographystyle{plain}
\bibliography{ref} 

\end{document}